\newtheorem{thm}{Theorem}
\newtheorem{remark}[thm]{Remark}
\newtheorem{prop}[thm]{Proposition}
\newtheorem{proposition}[thm]{Proposition}
\newtheorem{lemma}[thm]{Lemma}
\newtheorem*{theoremA}{Theorem A}
\newtheorem*{theoremB}{Theorem B}
\def\dim{\operatorname{dim}}
\def\Pic{\operatorname{Pic}}%
\def\Aut{\operatorname{Aut}}%
\def\gon{\operatorname{gon}}%
\def\Div{\operatorname{Div}}%
\def\elm{\operatorname{elm}}%
\def\Bl{\operatorname{Bl}}%
\def\supp{\operatorname{supp}}%
\title{Components of the Hilbert Scheme of smooth projective curves using ruled surfaces II: existence of non-reduced components}
\author[Y. Choi]{Youngook Choi}
\address{Department of Mathematics Education, Yeungnam University, 280 Daehak-Ro, \hfill \newline\texttt{}
 \indent Gyeongsan, Gyeongbuk 38541, Republic of Korea}
\email{ychoi824@yu.ac.kr}
\author[H. Iliev]{Hristo Iliev}
\address{American University in Bulgaria, 2700 Blagoevgrad, Bulgaria, and \hfill \newline\texttt{}
 \indent Institute of Mathematics and Informatics, Bulgarian Academy of Sciences, \hfill \newline\texttt{}
 \indent 1113 Sofia, Bulgaria}
\email{ hiliev@aubg.edu, hki@math.bas.bg}
\author[S. Kim]{Seonja Kim}
\address{Department of Electronic Engineering, Chungwoon University, Sukgol-ro, Michuhol-gu, \hfill \newline\texttt{}
 \indent Incheon 22100, Republic of Korea}
\email{sjkim@chungwoon.ac.kr}
\thanks{The first author was supported by Basic Science Research Program through the National Research Foundation of Korea(NRF)
funded by the Ministry of Education(2019R1I1A3A01055643).
The second author was supported by Grant KP-06-N 62/5 of Bulgarian National Science Fund.
The third  author was supported by by the National Research Foundation of Korea(NRF) grant funded by the Korea government(MSIT) (2022R1A2C1005977).}
\subjclass[2000]{Primary 14C05; Secondary 14H10}
\keywords{Hilbert scheme of curves, double covering, ruled surfaces}
\begin{document}

\setlength{\parindent}{5ex}

\begin{abstract}
For $\gamma \geq 7$ and $g \geq 6\gamma + 5$, we construct a family $\mathcal{F}^{\prime}$ of curves lying on cones in $\mathbb{P}^{g-3\gamma+1}$ over smooth non-degenerate curves of genus $\gamma$ and degree $g-2\gamma$ in $\mathbb{P}^{g-3\gamma+1}$. We show that $\dim \mathcal{F}^{\prime} = 2g-\gamma-1 + (g-3\gamma+1)^2$. For a general curve $X^{\prime}$ from the family $\mathcal{F}^{\prime}$, we compute the dimension of the space of its first-order deformations. We prove that the family $\mathcal{F}^{\prime}$ gives rise to an irreducible, non-reduced component $\mathcal{D}^{\prime}$ of the Hilbert scheme $\mathcal{I}_{2g-4\gamma + 1, g, g - 3\gamma + 1}$, which parametrizes smooth, irreducible, non-degenerate curves of degree $2g-4\gamma + 1$ and genus $g$ in $\mathbb{P}^{g-3\gamma+1}$. We obtain $\dim T_{[X^{\prime}]} \mathcal{D}^{\prime} = \dim \mathcal{D}^{\prime} + 1 = \dim \mathcal{F}^{\prime} + 1$.
\end{abstract}

\subjclass[2000]{Primary 14C05; Secondary 14H10}
\keywords{Hilbert scheme of curves, Brill-Noether theory, double covering}

\maketitle

\section{Introduction}\label{Section_1}

Denote by $\mathcal{I}_{d,g,r}$ the union of irreducible components of the Hilbert scheme whose general points correspond to smooth, irreducible, non-degenerate curves of degree $d$ and genus $g$ in $\mathbb{P}^r$. The geometry of $\mathcal{I}_{d,g,r}$ can be quite complex, as it may have multiple components of different dimension that might further be generically smooth or non-reduced. Recall that the minimal dimension of a component is given by $\lambda_{d,g,r} := (r+1)d - (r-3)(g-1)$, known as the \emph{expected dimension} of a component of $\mathcal{I}_{d,g,r}$.

When the \emph{Brill-Noether number} $\rho(d,g,r) := g-(r+1)(g-d+r)$ is non-negative, the scheme $\mathcal{I}_{d,g,r}$ possesses the unique component dominating the moduli space of curves $\mathcal{M}_g$, known as the \emph{distinguished component}. It is generically smooth and has the expected dimension, \cite{Ser84}. When $\rho(d,g,r) > 0$ and $d \geq \eta_3 := 2(1-\frac{3}{r+1})g + 1 + \frac{12}{r+1}$, the scheme $\mathcal{I}_{d,g,r}$ is irreducible (see \cite[Theorem 3.3]{Kim2001} for details).

If we fix $g$ and $r$ and trace the change of geometry of $\mathcal{I}_{d,g,r}$ as the degree $d$ decreases from $\eta_3$, we observe interesting phenomena. For $\eta_4 := 2(1-\frac{4}{r+1})g + 1 + \frac{26}{r+1} < d \leq \eta_3$, the scheme $\mathcal{I}_{d,g,r}$ is irreducible if and only if $3 \nmid (2g-2 - d)$. For $d \leq \eta_4$, the scheme $\mathcal{I}_{d,g,r}$ is reducible. In particular, for $d \leq 2(1-\frac{8}{r+3})g + 2 + \frac{8}{r+3} =: \xi_2$, it acquires components that parametrize curves that are double covers (see \cite[Proposition 3.2, Theorem 4.3 and Theorem 4.4]{CIK17} and \cite[Theorem A]{CIK21}).

Our paper \cite{CIK21} extended the result \cite[Theorem 4.3]{CIK17} by showing that the corresponding components are generically smooth. We recall its main result here for the convenience of the reader.

\begin{theoremA}[\cite{CIK21}]
Assume that $g$ and $\gamma$ are integers with $g \geq 4\gamma -2 \geq 38$. Let
\[
  d:= 2g - 4\gamma + 2 \quad \mbox{ and } \quad \max \left\lbrace \gamma, \frac{2(g-1)}{\gamma} \right\rbrace \leq r \leq R:= g
- 3\gamma + 2 \, .
\]
Then the Hilbert scheme $\mathcal{I}_{d, g, r}$ possesses a generically reduced component $\mathcal{D}_{d, g, r}$ for which
\[
  \dim \mathcal{D}_{d, g, r} = \lambda_{d, g, r} + r\gamma - 2g+2 \, .
\]
Further, let $X_r \subset \mathbb{P}^r$ be a smooth curve corresponding to a general point  of $\mathcal{D}_{d, g, r}$.
\begin{enumerate}[label=(\roman*), leftmargin=*, font=\rmfamily]
  \item If $r = R$ then $X_R$ is the intersection of a general quadric  hypersurface with a cone over a smooth curve $Y$ of degree $g - 2\gamma + 1$ and genus $\gamma$ in $\mathbb{P}^{R-1}$ and $X_R$ is embedded in $\mathbb{P}^R$ by the complete linear series $|R_{\varphi}|$ on $X_R$, where $R_{\varphi}$ is the ramification divisor of the natural projection morphism $\varphi : X_R \to Y$ of degree 2 given by the ruling of cone;

  \item If $r < R$ then $X_r$ is given by a general projection of some $X_R$ as in {\rm (i)}, that is, $X_r$ is embedded in $\mathbb{P}^{r}$ by a general linear subseries $g^r_d$ of $|R_{\varphi}|$.
\end{enumerate}
\end{theoremA}

The main result of the present work establishes the existence of families of curves related to those from \cite[Theorem 4.4]{CIK17}, which give rise to non-reduced components of the Hilbert scheme, as contained in the next theorem.

\begin{theoremB}
Assume that $g$ and $\gamma$ are integers such that $\gamma \geq 7$ and $g \geq 6\gamma + 5$. Let
\[
  d:= 2g - 4\gamma + 2 \quad \mbox{ and } \quad R:= g - 3\gamma + 2 \, .
\]
Then the Hilbert scheme $\mathcal{I}_{d-1, g, R-1}$ possesses a \emph{non-reduced} component $\mathcal{D}^{\prime}_{d-1, g, R-1}$ such that:
\begin{enumerate}[label=(\roman*), leftmargin=*, font=\rmfamily]
 \item $\dim \mathcal{D}^{\prime}_{d-1, g, R-1} = \lambda_{d-1, g, R-1} + (R-1)\gamma - 2g+3 =  2g - \gamma - 1 +R^2 $;

 \item at a general point $[X^{\prime}] \in \mathcal{D}^{\prime}_{d-1, g, R-1}$ we have $\dim T_{[X^{\prime}]} \mathcal{D}^{\prime}_{d-1, g, R-1} = \dim \mathcal{D}^{\prime}_{d-1, g, R-1} + 1$;

 \item a general point $[X^{\prime}] \in \mathcal{D}^{\prime}_{d-1, g, R-1}$ represents a curve $X^{\prime}$ lying on a cone $F^{\prime}$ over a smooth curve $Y^{\prime}$ of genus $\gamma$ and degree $g - 2\gamma $ in $\mathbb{P}^{R-2}$ such that

 \begin{enumerate}[label=\theenumi.\arabic*.]
  \item $X^{\prime} \subset \mathbb{P}^{R-1}$ is linearly normal and passes through the vertex $P^{\prime}$ of the cone $F^{\prime}$;

  \item the projection from $P^{\prime}$ to the hyperplane in $\mathbb{P}^{R-1}$ containing $Y^{\prime}$ induces a morphism ${\varphi}^{\prime}  : X^{\prime} \to Y^{\prime}$ of degree two ;

  \item $\arrowvert \mathcal{O}_{X^{\prime}} (1)\arrowvert \equiv \arrowvert R_{{\varphi}^{\prime}} - \zeta \arrowvert$ on $X^{\prime}$, where $R_{{\varphi}^{\prime}}$ is the ramification divisor of ${\varphi}^{\prime}$ and $\zeta \in X^{\prime}$ is the second point, apart from $P^{\prime}$, in which the tangent line to $X^{\prime}$ at $P^{\prime}$ meets $X^{\prime}$.
 \end{enumerate}
\end{enumerate}
\end{theoremB}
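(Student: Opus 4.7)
The plan is to realize the family $\mathcal{D}'$ explicitly via a double-cover construction on a ruled surface, compute its dimension by a parameter count, and then establish the tangent space bound through a normal-bundle calculation carried out on the smooth resolution of the cone.

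\textbf{Construction and dimension count.} I would parametrize $\mathcal{D}'$ by tuples $(Y, L, X, \zeta)$, where $Y$ is a smooth curve of genus $\gamma$, $L$ is a line bundle on $Y$ determining the geometrically ruled surface $F = \mathbb{P}(\mathcal{O}_Y \oplus L) \to Y$ with minimal section $C_0$ and fiber class $f$, $X \in |2C_0 + \mathfrak{b} f|$ is a smooth irreducible double cover $\varphi : X \to Y$ for an appropriate divisor class $\mathfrak{b}$ on $Y$, and $\zeta \in X$ is a ramification point of $\varphi$. By Riemann--Hurwitz, $\deg R_{\varphi} = 2g - 4\gamma + 2 = d$, so $R_{\varphi} - \zeta$ has degree $d - 1$. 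A cohomological computation using $R_{\varphi} = K_X - \varphi^* K_Y$ together with Clifford-type estimates (available thanks to $g \geq 6\gamma + 5$) gives $h^0(X, R_{\varphi} - \zeta) = R$, and the image $X'$ of $X$ under $|R_{\varphi} - \zeta|$ lies on the cone $F'$ over the corresponding embedding of $Y'$ and passes through the vertex $P'$; this realizes condition (iii). A parameter count in the style of \cite[Theorem 4.4]{CIK17} --- summing $3\gamma - 3$ for $Y$, the $\gamma$-dimensional choice of $L$, $\dim |2C_0 + \mathfrak{b} f|$ on $F$, one for $\zeta$, and $R^2 - 1$ for $\PGL{R}{\mathbb{C}}$, minus the dimensions of the acting automorphism groups --- then yields the stated $\dim \mathcal{D}' = 2g - \gamma - 1 + R^2$.

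\textbf{Normal bundle calculation.} For the tangent space I would work on the smooth ruled surface $F$ resolving the cone $F'$. Let $\pi : \Bl_{P'} \mathbb{P}^{R-1} \to \mathbb{P}^{R-1}$ be the blow-up at the vertex; then $F$ is embedded as the strict transform of $F'$, and the strict transform of $X'$ is the original curve $X \subset F$. Combining the standard normal-bundle sequence
\[
0 \to N_{X/F} \to N_{X/\Bl_{P'} \mathbb{P}^{R-1}} \to N_{F/\Bl_{P'} \mathbb{P}^{R-1}} \big|_{X} \to 0
\]
with the comparison between $N_{X/\Bl_{P'} \mathbb{P}^{R-1}}$ and $N_{X'/\mathbb{P}^{R-1}}$ at $\zeta$ (where $X$ meets the exceptional divisor), I would compute $h^0(X', N_{X'/\mathbb{P}^{R-1}})$. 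The first term is controlled by $\mathcal{O}_X(X) \cong \mathcal{O}_F(2C_0 + \mathfrak{b}f)|_{X}$, and the third by the Euler-type relative sequence for $F \hookrightarrow \Bl_{P'}\mathbb{P}^{R-1}$ together with $h^0(X, \mathcal{O}_X(1)) = R$ and the vanishing of $h^1$ on $Y$ of appropriate pushforward bundles under $\varphi$.

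\textbf{The extra tangent direction and fullness.} The expected outcome is that $h^0(N_{X'/\mathbb{P}^{R-1}}) - \dim \mathcal{D}' = 1$, with the extra direction arising from an infinitesimal deformation of $X'$ that points off the cone $F'$ near the vertex $P'$. This first-order deformation fails to integrate because any actual curve in $\mathcal{I}_{d-1, g, R-1}$ specializing to $[X']$ must retain the $\varphi$-induced pencil of degree two, forcing it back onto a cone-like configuration. Showing that this obstructed contribution has dimension \emph{exactly} one, and that $\mathcal{D}'$ is itself a full irreducible component rather than being contained in a larger one, are the main technical hurdles I expect. The latter should follow by appealing to the classification in \cite[Theorem 3.3]{Kim2001} together with the bounds $\gamma \geq 7$ and $g \geq 6\gamma + 5$ to exclude deformation onto adjacent components carrying curves of different gonality structure.
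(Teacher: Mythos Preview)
Your construction and parameter count track the paper's Step~I closely, though note that $\zeta$ need not be a ramification point: in the paper it arises as the image of the second point where the distinguished fibre $q\mathfrak{f}'$ meets $C'$, and the parameter ``$+1$'' comes from the choice of $q \in \Gamma$, not from choosing among ramification points.

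For the normal-bundle computation (Step~II) your route genuinely differs from the paper's. You propose to work on $\Bl_{P'}\mathbb{P}^{R-1}$ via the surface sequence $0 \to N_{X/F} \to N_{X/\Bl} \to N_{F/\Bl}|_X \to 0$; the paper instead applies its Proposition~\ref{Sec2InnerProjProp} to the inner projection $\varphi' : X' \to Y'$ from the vertex, obtaining
\[
 0 \to \mathcal{O}_{X'}(1)(R_{\varphi'} + 2P') \to N_{X'/\mathbb{P}^{R-1}} \to {\varphi'}^{\ast} N_{Y'/\mathbb{P}^{R-2}}(P') \to 0 \, .
\]
This reduces everything to $h^0(N_{Y'/\mathbb{P}^{R-2}})$ and, after the pushforward identity $\varphi'_{\ast}\mathcal{O}_{X'}(P') \cong \mathcal{O}_{Y'} \oplus \mathcal{O}_{Y'}(-1)$, to $h^0(N_{Y'/\mathbb{P}^{R-2}}(-1)) = R-1$ via Lopez's Gaussian-map result \cite{Lop96}. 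Your blow-up approach may well succeed, but controlling $N_{F/\Bl_{P'}\mathbb{P}^{R-1}}|_X$ is less transparent, and you would still need an input equivalent to Lopez's theorem somewhere.

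The real gap is Step~III. Invoking \cite[Theorem~3.3]{Kim2001} cannot work: that theorem asserts irreducibility of $\mathcal{I}_{d,g,r}$ only when $d \geq \eta_3$, whereas here $d-1$ is far below that threshold and $\mathcal{I}_{d-1,g,R-1}$ is reducible. The paper's argument to exclude $\mathcal{D}' \subsetneq \mathcal{E}$ is substantial: for a general $\mathcal{X}_t \in \mathcal{E}$ one passes to the residual series $|\omega_{\mathcal{X}_t}(-1)|$, a $g^{\gamma-1}_{4\gamma-3}$, and studies the morphism $\phi_t$ it induces. The case $\deg\phi_t = 1$ is killed by the Castelnuovo genus bound (this is exactly where $g \geq 6\gamma+5$ enters). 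For $\deg\phi_t = k \geq 2$ one combines the gonality bound $\gon(\mathcal{X}_{t_0}) = 2\lfloor(\gamma+3)/2\rfloor \geq 10$ (from Castelnuovo--Severi, using $\gamma \geq 7$) with the dimension of the covering loci $\Sigma^k_{g,\tau} \subset \mathcal{M}_g$ to force $k=2$, $\tau=\gamma$, and finally derive a contradiction with very-ampleness of $\mathcal{O}_{\mathcal{X}_t}(1)$ or with the dimension count. Your gonality intuition is the right entry point, but the proof requires this full case analysis rather than an appeal to an irreducibility theorem that does not apply in this range.
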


We remark that David Mumford showed in his famous paper on the pathologies of Hilbert scheme \cite{Mum62} that $\mathcal{I}_{14,24,3}$ is non-reduced. Since then, examples of non-reduced components of the Hilbert scheme of space curves $\mathcal{I}_{d, g, 3}$ have been produced in \cite{GP82, Kle87, Ell87, MP96, Nas06, KO15, Dan17}. However, all of them have relied on allocating a family of curves with desired invariants on a surface in $\mathbb{P}^3$. This suggests that to construct non-reduced components of $\mathcal{I}_{d, g, r}$ for $r \geq 4$, a different approach is needed.

Note that showing that a component of $\mathcal{I}_{d,g,r}$ is non-reduced is equivalent to showing that a curve  corresponding to a general point of the component has a first order deformation that does not extend infinitesimally. This is more difficult to do when $r \geq 4$. To our knowledge, the only example of a non-reduced component of $\mathcal{I}_{d,g,r}$ for $r \geq 4$ is the one given by Ciliberto, Lopez and Miranda (see \cite[Theorem 4.11]{CLM96}).

The primary objective of our study was to develop our understanding of the geometry of components of $\mathcal{I}_{d,g,r}$, particularly the families of curves related to those in \cite[Theorem 4.4]{CIK17}. This work builds upon our previous papers \cite{CIK17} and \cite{CIK21} and should be regarded in their context. For this reason, we fix the numbers $d$ and $R$ throughout the paper as
\[
  d:= 2g - 4\gamma + 2 \quad \mbox{ and } \quad R:= g - 3\gamma + 2 \, ,
\]
exactly as they appeared in \cite[Theorem A]{CIK21}, where we showed the existence of a reduced irreducible component $\mathcal{D}_{d,g,R}$ of $\mathcal{I}_{d,g,R}$ parametrizing curves on cones over smooth curves of genus $\gamma$ and degree $g - 2\gamma + 1$ in $\mathbb{P}^{R-1}$. The curves parametrized by $\mathcal{D}^{\prime}_{d-1,g,R-1}$ in {\rm Theorem B} arise as inner projections  of the former (hence the 'prime' in the notation). This allows us to allocate them on cones and compute both the dimension of the family and the dimension of the tangent space  to $\mathcal{I}_{d-1,g,R-1}$ at a general $[X^{\prime}] \in \mathcal{D}^{\prime}_{d-1,g,R-1}$. In Section 2, we provide a geometric explanation of how the family of curves parametrized by $\mathcal{D}^{\prime}_{d-1, g, R-1}$ was found, and we also prove an auxiliary result. The rigorous algebraic construction of the family itself and the proof of {\rm Theorem B} are presented in Section 3. In the final section, we offer further details on the relationship between the families of curves parametrized by the components from Theorem A and Theorem B.

The proof of {\rm Theorem B} proceeds in three steps: constructing the family $\mathcal{F}^{\prime}$ of smooth curves with the desired properties; computing the dimension of the tangent space to $\mathcal{D}^{\prime}_{d-1, g, R-1}$ at a general point $[X^{\prime}] \in \mathcal{D}^{\prime}_{d-1, g, R-1}$, where we take $\mathcal{D}^{\prime}_{d-1, g, R-1}$ to be the closure in $\mathcal{I}_{d-1, g, R-1}$ of the set parametrizing $\mathcal{F}^{\prime}$; and showing that $\mathcal{D}^{\prime}_{d-1, g, R-1}$ is not properly contained in any component of $\mathcal{I}_{d-1, g, R-1}$.

We consider schemes over $\mathbb{C}$. By a \emph{curve}, we understand a smooth, integral, projective curve. Given a line bundle $L$ on a smooth projective variety $X$, we denote by $\arrowvert L \arrowvert$ the complete  linear series  $\mathbb P\left(H^0(X,L)\right)$ on $X$. When $X$ is an object of a family, we denote by  $[X]$ the corresponding point of the Hilbert scheme representing the family.  Often, we abbreviate the notation of a line bundle $L \otimes \mathcal{O}_X (D)$ to $L(D)$ for a divisor $D$ on $X$.  We use $\sim$ to denote linear equivalence of divisors. For a smooth curve $C$ of genus $g$, we denote by $\gon (C)$ the gonality of $C$, which is the minimal $k$ for which there exists a pencil $g^1_k$ on $C$. Recall that if $[C] \in \mathcal{M}_g$ is general, then $\gon (C) = \lfloor \frac{g+3}{2} \rfloor$. For all definitions and properties of the objects not explicitly introduced in the paper, refer to \cite{Hart77} and \cite{ACGH}.

Since this work is closely related to the results obtained in our two previous papers \cite{CIK17} and \cite{CIK21}, and as we refer to them often in what follows, we have opted to fix the values of $d$ as $d = 2g - 4\gamma + 2$ and $R$ as $R = g - 3\gamma + 2$, and to denote the component in {\rm Theorem B} as $\mathcal{D}^{\prime}_{d-1, g, R-1}$. We believe that this allows the results from the three papers to be viewed and understood in the same context.

\section{Motivation and preliminary results}\label{Section_2}

In \cite[Theorem 4.3]{CIK17}, we established the existence of a component $\mathcal{D}_{d, g, r}$ of $\mathcal{I}_{d, g, r}$ under specific numerical assumptions regarding  $d$, $g$ and $r \leq R$. It was shown that a general point of $\mathcal{D}_{d, g, r}$ corresponds to a curve $X$, which is a double covering $\varphi : X \to Y$ of a curve $Y$ of genus $\gamma$, where $\gamma = \frac{2g+2 - d}{4}$. The embedding $X \hookrightarrow \mathbb{P}^r$ was given by a sub-series of the space of global sections of $\mathcal{O}_X (R_{\varphi}) = \omega_X \otimes (\varphi^{\ast} \omega_Y)^{\vee}$, where $R_{\varphi}$ is the ramification divisor of $\varphi$. Similarly, \cite[Theorem 4.4]{CIK17} showed the existence of a component $\mathcal{D}^{\prime}_{d-1, g, r-1}$ of $\mathcal{I}_{d-1, g, r-1}$ whose general point also represents a double covering $\varphi^{\prime} : X^{\prime} \to Y^{\prime}$ of a curve of genus $\gamma$. However, the embedding in this case was given by a sub-series of $|\mathcal{O}_{X^{\prime}} (R_{\varphi^{\prime}}) (-x)|$, determined by the ramification divisor $R_{\varphi^{\prime}}$ and an arbitrary point $x \in X^{\prime}$.

To further explore the geometry of the family of curves parametrized by $\mathcal{D}_{d, g, R} \subset \mathcal{I}_{d, g, R}$, we reconstructed it in \cite[Theorem A]{CIK21} as quadratic hypersurface sections of cones in $\mathbb{P}^R$ over smooth curves of genus $\gamma$ and degree $g - 2\gamma + 1$ in $\mathbb{P}^{R-1}$. We remark that the curves parametrized by $\mathcal{D}_{d, g, r}$ for $r < R$ were obtained as generic projections to a general $\mathbb{P}^r \subset \mathbb{P}^R$. In the discussion that follows, we focus on the case $r = R$.

Since both coverings $\varphi : X \to Y$ and $\varphi^{\prime} : X^{\prime} \to Y^{\prime}$ were general, we can identify the curves parametrized by $\mathcal{D}^{\prime}_{d-1, g, R-1}$ as inner projections of the curves parametrized by $\mathcal{D}_{d, g, R}$. Specifically, let $X \hookrightarrow \mathbb{P}^R$ correspond to a general point of $\mathcal{D}_{d, g, R}$. According to  \cite[Theorem A]{CIK21}, $X$ lies on a cone $F \subset \mathbb{P}^R$ over a general curve $Y$ of genus $\gamma$ and degree $g - 2\gamma + 1$ in $\mathbb{P}^{R-1}$, and $X$ is cut on $F$ by a general quadratic hypersurface in $\mathbb{P}^R$. Let $P$ denote the vertex of $F$. The projection of $X$ from $P$ to the hyperplane $\mathbb{P}^{R-1}$ containing $Y$, or equivalently the ruling of $F$, presents $X$ as a double covering $\varphi : X \to Y$. According to \cite{CIK21}, $\mathcal{O}_X (1) = \mathcal{O}_X (R_{\varphi}) = \omega_X \otimes (\varphi^{\ast} \omega_Y)^{\vee}$.

\begin{prop}\label{Sec2:PropInnerProj}
Using the setup in the preceding paragraph, let $x \in X$ be a point, and let $H \subset \mathbb{P}^R$ be a general hyperplane. Assume that $Y$ contains $x$. Consider the linear projection
\[
\pi_x \, : \, F \dashrightarrow H
\]
with center $x$, see Figure \ref{Figure_1}. Then
\begin{enumerate}[label=(\arabic*), leftmargin=*, font=\rmfamily]
    \item The restriction ${\pi_x}_{|_X} \, : \, X \dashrightarrow H$ extends to an embedding $\widetilde{{\pi_x}_{|_X}} : X \hookrightarrow H \cong \mathbb{P}^{R-1}$ as the degree of $X^{\prime} := \widetilde{{\pi_x}_{|_X}}(X)$ is $\deg X^{\prime} = \deg X - 1 = d - 1 = 2g - 4\gamma + 1$.

    \item the restriction ${\pi_x}_{|_Y} \, : \, Y \dashrightarrow H$ extends to an embedding $\widetilde{{\pi_x}_{|_Y}} : Y  \hookrightarrow H \cap H_Y \cong \mathbb{P}^{R-2}$, where $H_Y$ is the hyperplane of $\mathbb{P}^{R}$ that cuts $Y$ on $F$ as the degree of $Y^{\prime} := \widetilde{{\pi_x}_{|_Y}} (Y)$ is $\deg Y^{\prime} = \deg Y -1 = g -2\gamma$;

    \item The rational map $\pi_x \, : \, F \dashrightarrow H$
    gives rise to a transformation $\widetilde{\pi_x}$ of $F$ into $F^{\prime}$ such that
    \begin{itemize}
        \item $\widetilde{\pi_x} (z) = \pi_x(z)$ for any point $z \in F \setminus \{x\}$.

        \item $\widetilde{\pi_x}$ transforms the vertex $P$ into the vertex $P^{\prime}$ of $F^{\prime}$.

        \item $\widetilde{\pi_x}$ transforms the point $x$ into the line from the ruling of the cone $F^{\prime}$ determined by $P^{\prime}$ and $\widetilde{{\pi_x}_{|_Y}}(x)$.
    \end{itemize}
\end{enumerate}
\end{prop}

\begin{figure}[h]
  \centering
\begin{tikzpicture}[scale=0.62]
    \draw[line width=1pt, gray!50] plot coordinates{(-7,1) (-4,7)};
    \draw[line width=1pt, gray!50] plot coordinates{(-3,0) (-6,7.5)};
    \draw [dashed, line width=0.5pt, gray!50] plot coordinates{(-7, 1) (-7.5,0)};
    \draw [dashed, line width=0.5pt, gray!50] plot coordinates{(-3, 0) (-2.75,-0.5)};
    \draw [line width=1.5pt, red] plot [smooth, tension=0.5] coordinates{(-7,1) (-6.5,0.5) (-6,0.1) (-5,-0.2) (-4,-0.15) (-3,0)};
    \draw [dashed, line width=1.5pt, red] plot [smooth, tension=0.5] coordinates{(-7,1) (-6.5,1.2) (-6,1.25) (-5,1.2) (-4,0.8) (-3,0)};
    \node[red] at (-3.2, -0.35) {{\tiny $Y$}};
    \draw[line width=1pt, gray!50] plot [smooth, tension=0.5] coordinates{(-6,7.5) (-5.5, 7.15) (-5, 6.95) (-4.5, 6.9) (-4,7)};
    \draw[line width=0.65pt, gray!50] plot [smooth, tension=0.5] coordinates{(-6,7.5) (-5.5, 7.5) (-5, 7.42) (-4.5, 7.25) (-4,7)};

    \draw [dashed, line width=1.2pt, lime!90] plot coordinates{(-6.425, -0.5) (-6.23,0.23)};
    \draw [line width=1.2pt, lime!90] plot coordinates{(-6.23,0.23) (-5, 5)};
    \draw [dashed, line width=1.2pt, lime!90] plot coordinates{(-5, 5) (-4.47, 7.2)};

    \draw[line width=1pt, black!50] plot coordinates{(-7, 1.95) (-7.5,2) (-8.5, -0.5) (-2.5,-1) (-1.5, 1.5) (-2, 1.55)};
    \node[rotate=0] at (-2.7, 1.6) {{\scriptsize $\mathbb{P}^{R-1}$}};
    \node[rotate=0] at (-8, 6) {{$\mathbb{P}^{R}$}};
    \node[rotate=0] at (-4, 6) {{\footnotesize $F$}};

    \draw [dashed, line width=0.5pt, blue] plot [smooth, tension=0.5] coordinates{(-6.8,0.82) (-6.9, 0.6) (-6.7, 0.3) (-6.5, 0.2) (-6.2, 0.25) };
    \draw [line width=0.5pt, blue] plot [smooth, tension=0.5] coordinates{(-6.2,0.25) (-5, 0.8) (-4.5, 1.25) (-4, 1.85) (-3.88,2.2)};
    \draw [dashed, line width=0.5pt, blue] plot [smooth, tension=0.5] coordinates{ (-3.88,2.2) (-4.5, 2.4) (-5, 2.5) (-5.5, 2.6) (-5.75, 2.7) (-6, 3) (-5.5, 3.55) (-5, 3.7) (-4.4, 3.5)};
    \draw [line width=0.5pt, blue] plot [smooth, tension=0.5] coordinates{(-4.4, 3.5) (-4.4, 3.25) (-4.5, 3) (-5, 2.5) (-5.5, 2) (-6, 1.7) (-6.5, 1.28) (-6.65, 1.1) (-6.8,0.82)};
    \node[blue] at (-4.41, 2.75) {{\tiny $X$}};

    \draw [fill=violet, line width=0pt] (-5,5) circle (2.8pt);
    \node[right] at (-5, 5) {{\tiny $P$}};
    \draw [fill=orange!50, line width=0pt] (-6.22,0.232) circle (1.5pt);
    \node[below] at (-6.12,0.232) {{\tiny $x$}};

    \draw[line width=1pt, gray!50] plot coordinates{(3.25,-0.5) (5.75,4.5)};
    \draw[line width=1pt, gray!50] plot coordinates{(7,-1) (4,5)};
    \draw [dashed, line width=0.5pt, gray!50] plot coordinates{(2.75, -1.5) (3.25,-0.5)};
    \draw [dashed, line width=0.5pt, gray!50] plot coordinates{(7.25, -1.5) (7,-1)};
    \draw [line width=1.5pt, red] plot [smooth, tension=0.5] coordinates{(3.25,-0.5) (4,-1) (5,-1.2) (5.5,-1.25) (6,-1.2) (7,-1)};
    \draw [dashed, line width=1.5pt, red] plot [smooth, tension=0.5] coordinates{(3.25,-0.5) (4,-0.25) (4.5,-0.2) (5,-0.25) (5.5,-0.35) (6,-0.5) (6.5,-0.7) (7,-1)};
    \node[red] at (6.65, -1.4) {{\tiny $Y^{\prime}$}};
    \draw[line width=1pt, gray!50] plot [smooth, tension=0.5] coordinates{(4,5) (4.5, 4.5) (5, 4.35) (5.5, 4.4) (5.75, 4.5)};
    \draw[line width=0.65pt, gray!50] plot [smooth, tension=0.5] coordinates{(4,5) (4.5, 5.05) (5, 4.95) (5.5, 4.7) (5.75, 4.5)};

    \draw [dashed, line width=1.2pt, orange!50] plot coordinates{(3.09, -1.3) (3.4,-0.6)};
    \draw [line width=1.2pt, orange!50] plot coordinates{(3.4,-0.6) (5, 3)};
    \draw [dashed, line width=1.2pt, orange!50] plot coordinates{(5, 3) (5.68, 4.57)};

    \draw[line width=1pt, black!50] plot coordinates{(3.2, 0.47) (3,0.5) (2.2, -1.5) (7.2,-2.0) (8, 0) (7.5, 0.05)};
    \node[rotate=0] at (7.2, 0.3) {{\scriptsize $\mathbb{P}^{R-2}$}};
    \node[rotate=0] at (1.5, 3.5) {{\small $H \cong \mathbb{P}^{R-1}$}};
    \node[rotate=0] at (5.8, 3.7) {{\footnotesize $F^{\prime}$}};

    \draw [line width=0.5pt, blue] plot [smooth, tension=0.5] coordinates{(5.25,4) (5,3.6) (4.95,3.3) (5,3)};
    \draw [dashed, line width=0.5pt, blue] plot [smooth, tension=0.5] coordinates{(5,3) (5.1, 2.7) (5.3, 2.3) (5.5, 2)};
    \draw [line width=0.5pt, blue] plot [smooth, tension=0.5] coordinates{(5.5,2) (5.3, 1.7) (5, 1.5) (4.5, 1.2) (4,1)};
    \draw [dashed, line width=0.5pt, blue] plot [smooth, tension=0.5] coordinates{(4,1) (4.5, 0.97) (5, 0.84) (5.5, 0.66) (6, 0.36) (6.5, 0)};
    \draw [line width=0.5pt, blue] plot [smooth, tension=0.5] coordinates{(6.5,0) (6.25, -0.3) (6, -0.5) (5.5, -0.8) (5, -1) (4.5, -1.07) (4,-1)};
    \draw [dashed, line width=0.5pt, blue] plot [smooth, tension=0.5] coordinates{(4,-1) (3.5, -1.1) (3.25, -1.3) (3, -1.5)};
    \node[blue] at (5.42, 1.45) {{\tiny $X^{\prime}$}};

    \draw [fill=violet, line width=0pt] (5,3) circle (2.8pt);
    \node[right] at (5, 3) {{\tiny $P^{\prime}$}};

    \draw[->, dashed] plot [smooth, tension=0.5] coordinates{(-1.5,0.5) (2.5,0)};
    \node[rotate=0] at (0.25, 0.65) {{\footnotesize $\pi_x$}};

    \draw [line width=1.5pt, gray!50, domain=-160:130, samples=100] plot ({5 + 5*cos(\x)}, {1.1 + 5*sin(\x)});
\end{tikzpicture}

  \caption{When $x \in X \cap Y$, the inner projection $\pi_x$ with center $x$ transforms $F \subset \mathbb{P}^R$ into $F^{\prime} \subset H$, and it also maps $X$ isomorphically to $X^{\prime}$ and $Y$ isomorphically to $Y^{\prime}$.}
  \label{Figure_1}
\end{figure}

\begin{proof}
The image under $\pi_x$ of a point $z \in F$ different from $x$ is the point $z^{\prime}$ where the line $\overline{xz}$ through $x$ and $z$ meets $H$. Thus, the image of any line $l$ from the ruling of $F$ will be the line $l^{\prime}$ in which the plane spanned by $x$ and $l$ intersects $H$. For a smooth curve $Z \subset F$ passing through $x$, the map ${\pi_x}_{|_Z} : Z \dashrightarrow H$ extends to a morphism, known as the inner projection of $Z$, by defining the image of $x$ to be the point where the tangent line $t_{Z,x}$ to $Z$ at $x$ meets the hyperplane $H$. Since the hyperplane $H \cong \mathbb{P}^R$ is general, such a curve will be isomorphic to its image $Z^{\prime} = {\pi_x}_{|_Z} (Z)$, provided $Z$ doesn't have trisecant lines. According to the construction in \cite[Theorem A]{CIK21}, the curve $Y \subset H_Y \cong \mathbb{P}^{R-1}$ is general of genus $\gamma$ and degree $g - 2\gamma + 1$ in $\mathbb{P}^{R-1}$, $R - 1 = g - 3\gamma + 1$. Hence, it doesn't have trisecant lines. From \cite[Proposition 8.4.3]{FOV99}, it follows that the rational map ${\pi_x}_{|_Y} \, : \, Y \dashrightarrow H$ extends to an embedding $\widetilde{{\pi_x}_{|_Y}} : Y \hookrightarrow H \cap H_Y \cong \mathbb{P}^{R-2}$. This proves statement (2). The proof of statement (1) is similar. If $X$ had a trisecant line, say $l$, then it couldn't be a line from the ruling of $F$ because $X$ meets every such line in exactly two points. Then the image of $l$ under the projection with center $P$ to $H_Y$ would be a trisecant to $Y$, which we saw to be impossible. Applying again \cite[Proposition 8.4.3]{FOV99}, we deduce part (1).

It remains to show statement (3). For any such $Z \subset F$ passing through $x$, the tangent plane $T_{F,x}$ to $F$ at $x$ contains $t_{Z,x}$ and also the line $l_x$ from the ruling of $F$ that passes through $x$. The plane $T_{F,x}$ meets $H$ in a line, say $l^{\prime}_x$, which we postulate to be the transform $\widetilde{\pi_x} (x)$ of $x$ under $\widetilde{\pi_x}$. Since $H_Y$ cutting $Y$ on $F$ is a general hyperplane passing through $x$, the image $Y^{\prime} = \widetilde{{\pi_x}_{|_Y}} (Y)$ must be contained in the linear subspace $H_Y \cap H$ of codimension 2 in $\mathbb{P}^R$. Consequently, the cone $F$ will be transformed into a cone $F^{\prime} \subset H \cong \mathbb{P}^{R-1}$ over $Y^{\prime}$. It is clear from the construction that the properties described in the proposition are satisfied.
\end{proof}

\begin{remark}
The line $l_x$ meets $X$ in two points. One of them is $x$, and let us say that $x^{\prime}$ is the second one. The transform $\widetilde{\pi_x}$ contracts $l_x \setminus \{x\}$ to the vertex $P^{\prime}$ of $F^{\prime}$. In particular, $\widetilde{\pi_x}(x^{\prime}) = P^{\prime}$. The "image" of $x$ is the line $l^{\prime}_x$ which meets $X^{\prime}$ in the points $P^{\prime}$ and $\widetilde{{\pi_x}_{|_X}}(x)$. It can be shown that $l^{\prime}_x$ is tangent to $X^{\prime}$ at $P^{\prime}$ as their intersection multiplicity is two (see \cite[Proposition 3]{CIK23}).
\end{remark}

The projection of $X^{\prime}$ from $P^{\prime}$ to the hyperplane in $H \cong \mathbb{P}^{R-1}$ that cuts $Y^{\prime}$ on $F^{\prime}$ gives a double covering morphism $\varphi^{\prime} : X^{\prime} \to Y^{\prime}$. Due to $X^{\prime}$ being an inner projection of $X$ with center $x$, we have that the embedding $X^{\prime} \hookrightarrow H \cong \mathbb{P}^{R-1}$ is induced by the line bundle
\[
\mathcal{O}_X (1) (-x) \cong \mathcal{O}_X (R_{\varphi} - x) \cong \omega_X \otimes (\varphi^{\ast} \omega_Y)^{\vee} (-x)
\]
as in \cite[Theorem 4.4]{CIK17}; see also Lemma \ref{SSec32:LemmaRSFam}. This geometric picture hints that we can reconstruct the family from \cite[Theorem 4.4]{CIK17} as a family of curves on elementary transformations of the ruled surfaces used in \cite[Theorem A]{CIK21}; see {\rm Figure \ref{Figure_2}} and {\rm Figure \ref{Figure_3}} in Section \ref{Section_4}.

Before proceeding with the rigorous construction of the component $\mathcal{D}^{\prime}_{d-1, g, R-1}$ and the proof of {\rm Theorem B}, we formulate and prove the following auxiliary result which will be used in the calculation of $h^0 (X^{\prime}, N_{X^{\prime} / \mathbb{P}^{R-1}})$.

\begin{proposition}\label{Sec2InnerProjProp}
Let $C$ be a smooth integral curve of genus $g$ properly contained in $\mathbb{P}^r$, $r \geq 3$. Let $H$  be a hyperplane in $\mathbb{P}^r$, and $x$ be a point on $C$. Consider the linear projection map $\pi_x : \mathbb{P} \dashrightarrow H$ with center $x$. Denote by $\phi \, : \, C \to H$ the morphism extending the rational map ${\pi_x}_{|_C} : C \dashrightarrow H$. Suppose that the image $\phi(C)$ is a smooth integral curve $D$ of genus $\gamma$ properly contained in $H$. Let  $R_{\phi}$ the ramification divisor of the morphism $\phi : C \to D$. Then
\begin{equation}\label{Sec2InnerProjSESNormBund}
    0 \to \mathcal{O}_C (R_{\phi}) \otimes \mathcal{O}_C (1) \otimes \mathcal{O}_C (2x)
      \to N_{C / \mathbb{P}^r}
      \to \phi^{\ast} N_{D / \mathbb{P}^{r-1}} \otimes \mathcal{O}_C (x)
      \to 0 \, ,
\end{equation}
where $N_{C / \mathbb{P}^r}$ is the normal bundle of $C$ in $\mathbb{P}^r$ and $N_{D / \mathbb{P}^{r-1}}$ is the normal bundle of $D$ in $H$.
\end{proposition}
\begin{proof}
The morphism $\phi$ is defined via the blow-up $\sigma \, : \, \widetilde{\mathbb{P}^r}(x) \to \mathbb{P}^r$ of $\mathbb{P}^r$ at $x$. Let $E$ be the exceptional divisor, $E = \sigma^{-1}(x) \cong \mathbb{P}^{r-1}$, and let $h$ be the hyperplane class of $\mathbb{P}^r$. Let $\sigma^{\ast}(h)$ denote its pull-back on $\widetilde{\mathbb{P}^r}(x)$. Denote by $\tilde{C}$ the strict transform of $C$ under the blow-up. The complete linear system $|\sigma^{\ast}(h) - E|$ defines the morphism $\tilde{\pi}_x : \widetilde{\mathbb{P}^r}(x) \to \mathbb{P}^{r-1}$ extending the rational map $\pi_x$ (see \cite[Chapter 2, Example 7.17.3]{Hart77}). The isomorphism $C \cong \tilde{C}$ followed by ${\tilde{\pi}_x}|_{\tilde{C}}$ defines the morphism $\phi : C \to D$ extending ${\pi_x}_{|_C} : C \dashrightarrow H$. Since $\tilde{C}$ meets $E$ at a single point, it follows that $\phi^{\ast} \mathcal{O}_D (1) \cong \mathcal{O}_C (1) \otimes \mathcal{O}_C (-x)$ on $C$.

Consider the Euler sequences
\[
     0 \to \mathcal{O}_C
       \to \oplus^{r+1}_{1} \mathcal{O}_C (1)
       \to T_{\mathbb{P}^r |_C} \to 0 \,
\]
and
\[
     0 \to \mathcal{O}_D
       \to \oplus^{r}_{1} \mathcal{O}_D (1)
       \to T_{\mathbb{P}^{r-1} |_D}
       \to 0 \, .
\]

\noindent Pulling back the second exact sequence to $C$ via $\phi^{\ast}$ we obtain

{\small
\begin{equation*}
\begin{array}{ccccccccccccccccccccccc}
  & & & & 0 & & 0 & & \\[1ex]
  & &  & & \downarrow & & \downarrow & & \\ [1ex]
  & & 0 & & \mathcal{O}_C(1) &  & \ker(\alpha) & &  \\[1ex]
  & & \downarrow & & \downarrow & & \downarrow & \\[1ex]
  0 & \rightarrow & \mathcal{O}_{C} & \rightarrow & \oplus^{r+1}_{1} \mathcal{O}_C (1) & \rightarrow & T_{\mathbb{P}^r |_C} &\
\rightarrow & 0 \\[1ex]
  & & \downarrow & & \downarrow & & \downarrow \alpha & \\[1ex]
  0 & \rightarrow 	& \mathcal{O}_{C}(x) & \rightarrow
		& {\phi}^{\ast} \left( \oplus^{r}_{1} \mathcal{O}_{D}(1) \right) \otimes \mathcal{O}_{C} (x)		& \rightarrow
		& {\phi}^{\ast} \left( T_{\mathbb{P}^{r-1} |_{D}}\right)   \otimes \mathcal{O}_{C} (x)
	& \rightarrow & 0 \\[1ex]
  & & \downarrow & & \downarrow & & \downarrow & \\[1ex]
  & & \mathcal{O}_{x} & & 0 & &  0 & & \\[1ex]
  & & \downarrow & & & & & \\[1ex]
  & & 0 & & & & & \\[1ex]
\end{array}
\end{equation*}
}
\noindent By chasing the above diagram, we obtain from the Snake Lemma that $\ker(\alpha) \cong \mathcal{O}_{C} (1) \otimes \mathcal{O}_{C} (x)$,
whence
\[
 0 	\to \mathcal{O}_{C} (1) \otimes \mathcal{O}_{C} (x)
	\to T_{\mathbb{P}^r |_C}
	\to  {\phi}^{\ast} \left( T_{\mathbb{P}^{r-1} |_{D}}\right)   \otimes \mathcal{O}_{C} (x)
	\to 0 \, .
\]

\noindent Further, using the canonical sequence for $N_{C / \mathbb{P}^r}$
\[
  0 \to T_C \to {T_{\mathbb{P}^r}}_{|_C} \to N_{C / \mathbb{P}^r} \to 0
\]
and the one for $N_{D / \mathbb{P}^{r-1}}$ tensored by $\mathcal{O}_{C} (x)$
\[
  0 	\to {\phi}^{\ast}(T_{D}) \otimes \mathcal{O}_{C} (x)
	\to {\phi}^{\ast}({T_{\mathbb{P}^{r-1}}}_{|_{D}}) \otimes \mathcal{O}_{C} (x)
	\to {\phi}^{\ast}(N_{D / \mathbb{P}^{r-1}}) \otimes \mathcal{O}_{C} (x)
	\to 0 \, ,
\]
we get the commutative diagram
\begin{equation*}
\begin{array}{ccccccccccccccccccccccc}
  & & & & 0 & & 0 & & \\[1ex]
  & & & & \downarrow & & \downarrow & & \\[1ex]
  & & 0 &   & \mathcal{O}_C(1) \otimes \mathcal{O}_C (x) &  & \ker(\beta) & &  \\[1ex]
  & & \downarrow & & \downarrow & & \downarrow & \\[1ex]
  0 & \rightarrow & T_{C} & \rightarrow
      & T_{\mathbb{P}^r |_C} & \rightarrow & N_{C / \mathbb{P}^r} & \rightarrow & 0 & \\[1ex]
  & & \downarrow & & \downarrow & & \downarrow \beta & \\[1ex]

  0  &	\to  &  {\phi}^{\ast}(T_{D}) \otimes \mathcal{O}_{C} (x)
 &	\to   & {\phi}^{\ast}({T_{\mathbb{P}^{r-1}}}_{|_{D}}) \otimes \mathcal{O}_{C} (x)
 &	\to  & {\phi}^{\ast}(N_{D / \mathbb{P}^{r-1}}) \otimes \mathcal{O}_{C} (x)  &	\to & 0
\\[1ex]
    & & \downarrow & & \downarrow &  & \downarrow & \\[1ex]
  & & \mathcal{O}_{R_{ \phi }+x} & & 0 & & 0 & & \\[1ex]
  & & \downarrow & & &  & & \\[1ex]
  & & 0 & & & & & \\[1ex]
\end{array}
\end{equation*}

\noindent Similarly, as before, $\ker \beta \cong \mathcal{O}_C(1) \otimes \mathcal{O}_{C} (R_{ \phi }+2x)$, from which we obtain the short exact sequence
\[
  0   \to \mathcal{O}_{C} (1) \otimes \mathcal{O}_{C} (R_{ \phi }+2x)
      \to N_{C / \mathbb{P}^r}
      \to {\phi}^{\ast}N_{D / \mathbb{P}^{r-1}} \otimes \mathcal{O}_{C} (x)
      \to 0 \, .
\]
\end{proof}

\begin{remark}
When $\phi$ is an isomorphism, the claim can be derived from \cite[Lemma 4]{Ein87}; see also \cite[Lemma 6]{Wal94} and \cite[Proposition 6]{FS19}. We will use {\rm Proposition \ref{Sec2InnerProjProp}} in the proof of {\rm Theorem B} in the case of $\phi$ being a double covering.
\end{remark}

\section{Ruled surfaces and proof of Theorem B}\label{Section_3}

Proposition \ref{Sec2:PropInnerProj} provided the geometric motivation for the appearance of the curves parametrized by the component $\mathcal{D}^{\prime}_{d-1, g, R-1}$ in Theorem B. However, the proof of the theorem requires a more algebraic construction of the corresponding family of curves in order to compute its dimension and deduce additional properties. We remark that the cones considered in Proposition \ref{Sec2:PropInnerProj} can be viewed as decomposable ruled surfaces, which we will use. For this reason, we will first recall some facts about decomposable ruled surfaces, after which we will proceed with the proof of Theorem B.

\subsection{A few facts about ruled surfaces}\label{Sec2sub1}

Let $F \subset \mathbb{P}^{r+1}$, $r \geq 3$, be a cone over a smooth non-degenerate integral curve $\Gamma \subset \mathbb{P}^r$ of genus $\gamma > 0$ and degree $e > 2\gamma$, such that $r = e - \gamma$ (this means that the embedding $\Gamma \subset \mathbb{P}^r$ is complete). Such a cone can be viewed as the decomposable ruled surface $S = \mathbb{P}(\mathcal{E})$ with $\mathcal{E} = \mathcal{O}_{\Gamma} \oplus \mathcal{O}_{\Gamma}(-1)$. Recall that for any line bundle $\mathcal{L}$ on $\Gamma$ we have $\mathbb{P}(\mathcal{E}) \cong \mathbb{P}(\mathcal{E} \otimes \mathcal{L})$. Note that $\mathcal{E}$ is normalized in the sense that for every line bundle $\mathcal{L}$ with $\deg \mathcal{L} < 0$, we have $H^0(\mathcal{E} \otimes \mathcal{L}) = 0$ while $H^0(\mathcal{E}) \neq 0$. Also, there is a one-to-one correspondence between sections of $f : S = \mathbb{P}(\mathcal{E}) \to \Gamma$ and surjections $\mathcal{E} \twoheadrightarrow L$, with $L$ a line bundle on $\Gamma$. In particular, if $N = \ker(\mathcal{E} \twoheadrightarrow L)$ and $B$ is the section corresponding to the exact sequence
\[
0 \to N \to \mathcal{E} \to L \to 0,
\]
then $B^2 = \deg L - \deg N$. For this reason, the exact sequence
\[
0 \to \mathcal{O}_{\Gamma} \to \mathcal{O}_{\Gamma} \oplus \mathcal{O}_{\Gamma}(-1) \to \mathcal{O}_{\Gamma}(-1) \to 0
\]
yields a section, say $\Gamma_0$, whose self-intersection number is $\Gamma^2_0 = \deg \mathcal{O}_{\Gamma}(-1) = -e$. It is the minimal possible self-intersection number a section of $f : S = \mathbb{P}(\mathcal{O}_{\Gamma} \oplus \mathcal{O}_{\Gamma}(-1)) \to \Gamma$ can have. As $\mathcal{E}$ is decomposable, the section $\Gamma_0$ of minimal self-intersection is unique. The Picard group of $S$ is $\Pic(S) \cong \mathbb{Z}[\mathcal{O}_S(\Gamma_0)] \oplus f^{\ast}(\Pic(\Gamma))$. For a divisor $D$ on $\Gamma$, denote by $D\mathfrak{f}$ the divisor $f^{\ast}(D)$ on $S$. For every effective divisor $E \in |\mathcal{O}_{\Gamma}(1)|$, the exact sequence
\[
0 \to \mathcal{O}_{\Gamma}(-1) \to \mathcal{O}_{\Gamma} \oplus \mathcal{O}_{\Gamma}(-1) \to \mathcal{O}_{\Gamma} \to 0
\]
determines a section $\Gamma_1 \sim \Gamma_0 + E\mathfrak{f}$. As shown in \cite[Prop. 1]{CIK21}, the linear series $|\mathcal{O}_S(\Gamma_0 + E\mathfrak{f})|$ determines a morphism $\Psi : S \to \mathbb{P}^{r+1}$, $r = e - \gamma$, that maps $S$ into the cone $F$ and contracts $\Gamma_0$ to the vertex of $F$.

\begin{prop}\label{SSec31:LSonRS}
Let $\Gamma$, $S$ and $E$ be as above. Let $q \in \Gamma$ be a point and $m > 0$ be an integer. Then
\begin{itemize}
 \item[{\rm (a)}] the linear series $|\mathcal{O}_S (m\Gamma_0 + mE \mathfrak{f})|$ is base-point-free, $\dim |\mathcal{O}_S (m\Gamma_0 + mE \mathfrak{f})| = \binom{m+1}{2}e - m\gamma + m$ and the image of an effective divisor $C_m \in |\mathcal{O}_S (m\Gamma_0 + mE \mathfrak{f})|$ is a curve cut by a degree $m$ hypersurface in $\mathbb{P}^{r+1}$;

 \item[{\rm (b)}] the linear series $|\mathcal{O}_S (m\Gamma_0 + (mE+q) \mathfrak{f})|$ has a unique base-point $\Gamma_0 \cap q\mathfrak{f}$, $\dim |\mathcal{O}_S (m\Gamma_0 + (mE +q)\mathfrak{f})| = \binom{m+1}{2}e - m\gamma + 2m$ and the image of a general effective divisor $C^{\prime}_m \in |\mathcal{O}_S (m\Gamma_0 + (mE + q) \mathfrak{f})|$ is a smooth curve on $F$ passing through the vertex.
\end{itemize}
\end{prop}
\begin{proof}
Both statements regarding the dimensions of the linear series can be easily deduced, given that $S$ is decomposable. The dimensions are obtained by applying the formula that for a divisor $B$ on $\Gamma$,
\begin{equation}\label{SSec31:h0OS}
h^0(S, \mathcal{O}_S(m \Gamma_0 + B \mathfrak{f})) = \sum_{k=0}^m h^0(\Gamma, \mathcal{O}_{\Gamma}(B - kE)),
\end{equation}
as shown in \cite[Lemma 35]{FP05}. Thus, the claim in {\rm (a)} is a trivial generalization of the claim in \cite[Prop. 1]{CIK21}. The claim in {\rm (b)} follows from \cite[Prop. 36]{FP05}, which states that $\Gamma_0 \cap q\mathfrak{f}$ is the unique base point of $|m\Gamma_0 + (mE + q) \mathfrak{f}|$ on $S$ and that a general element of that linear series is a smooth integral curve. Since $\Psi$ contracts $\Gamma_0$ to the vertex of $F$, the image of $C^{\prime}_m$ must pass through the vertex.
\end{proof}

\subsection{Proof of Theorem B}\label{Sec2sub2}

We recall the basic numerical assumptions in the theorem:
\[
 \gamma \geq 7, \quad g \geq 6\gamma + 5, \quad d = 2g - 4\gamma + 2 \quad \mbox{ and } \quad R = g - 3\gamma + 2.
\]

The proof proceeds in three steps:

\subsection*{Step I: Construction of the family \texorpdfstring{$\mathcal{F}^{\prime}$}{F'} and \texorpdfstring{$\mathcal{D}^{\prime}_{d-1, g, R-1}$}{D' d-1, g, R-1}}

In this step, we describe the construction of the family \(\mathcal{F}^{\prime}\) and define \(\mathcal{D}^{\prime}_{d-1, g, R-1}\). We show that
\[
\dim \mathcal{D}^{\prime}_{d-1, g, R-1} = 2g - \gamma - 1 + R^2.
\]

\subsection*{Step II: Dimension of the space of first-order deformations}

In this step, we compute the dimension of the space of first-order deformations of a general curve \(X^{\prime}\) from the family \(\mathcal{F}^{\prime}\). We demonstrate that
\[
\dim T_{[X^{\prime}]} \mathcal{D}^{\prime}_{d-1, g, R-1} = h^0(X^{\prime}, N_{X^{\prime} / \mathbb{P}^{R-1}})
= 2g - \gamma + R^2 = \dim \mathcal{D}^{\prime}_{d-1, g, R-1} + 1.
\]

\subsection*{Step III: Non-reduced component \texorpdfstring{$\mathcal{D}^{\prime}_{d-1, g, R-1}$}{D' d-1, g, R-1}}

In this step, we show that no component of \(\mathcal{I}_{d-1, g, R-1}\) contains \(\mathcal{D}^{\prime}_{d-1, g, R-1}\) properly. This implies that \(\mathcal{D}^{\prime}_{d-1, g, R-1}\) is a non-reduced component of \(\mathcal{I}_{d-1, g, R-1}\).

\medskip

The claims formulated in {\rm Theorem B} are obtained in the course of the completion of those three steps. The characterization  of the curves parametrized by $\mathcal{D}^{\prime}_{d-1, g, R-1}$ as in item {\it (iii)} of {\rm Theorem B} is given in {\rm Step I}.

\medskip

\subsection*{Step I: Construction of the family \texorpdfstring{$\mathcal{F}^{\prime}$}{F'} and \texorpdfstring{$\mathcal{D}^{\prime}_{d-1, g, R-1}$}{D' d-1, g, R-1}}

Consider a general curve $\Gamma \in \mathcal{M}_{\gamma}$ of genus $\gamma$, a general divisor  $E^{\prime}$ of degree $g - 2\gamma$ on $\Gamma$ and a point $q \in \Gamma$. Let $S^{\prime}$ be the ruled surface $S^{\prime} := \mathbb{P} (\mathcal{O}_{\Gamma} \oplus \mathcal{O}_{\Gamma}(-E^{\prime}))$ with natural projection morphism $f^{\prime} : S \to \Gamma$. Let $\Gamma^{\prime}_0$ be the section corresponding to the surjective morphism $\mathcal{O}_{\Gamma} \oplus \mathcal{O}_{\Gamma}(-E^{\prime}) \twoheadrightarrow  \mathcal{O}_{\Gamma}(-E^{\prime})$.  Consider the morphism $\Psi^{\prime} : S^{\prime} \to \mathbb{P}^{R-1}$. Now, $\Pic (S^{\prime}) \cong \mathbb{Z} [\mathcal{O}_{S^{\prime}} (\Gamma^{\prime}_0)] \oplus {f^{\prime}}^{\ast} (\Pic (\Gamma))$ and we denote by $D\mathfrak{f^{\prime}}$ the divisor ${f^{\prime}}^{\ast} (D)$ on $S^{\prime}$ for a divisor $D$ on $\Gamma$. We have for the intersection numbers $\Gamma^{\prime}_0 \cdot \Gamma^{\prime}_0 = \deg \mathcal{O}_{\Gamma}(-E^{\prime}) = - (g - 2\gamma)$, $\Gamma^{\prime}_0 \cdot \mathfrak{f^{\prime}} = 1$ and $\mathfrak{f^{\prime}} \cdot \mathfrak{f^{\prime}} = 0$.

Define $\mathcal{F}^{\prime}$ as the family of curves that are images of the divisors from the linear series $|2{\Gamma}^{\prime}_0 + (2E^{\prime} + q){\mathfrak{f}^{\prime}}|$ on $S^{\prime}$ under the morphism $\Psi^{\prime}$, by varying $\Gamma$ in $\mathcal{M}_{\gamma}$, running $E$ through the set of general effective divisors of degree $g-2\gamma$ on $\Gamma$ and $q \in \Gamma$. Denote by $X^{\prime}$ a general element of the family $\mathcal{F}^{\prime}$. In particular, $X^{\prime} = \Psi^{\prime} (C^{\prime})$ for a general $C^{\prime} \in |2\Gamma^{\prime}_0 + (2E^{\prime} + q)\mathfrak{f}^{\prime}|$. Let $D^{\prime} \in |\Gamma^{\prime}_0 + E^{\prime}\mathfrak{f}^{\prime}|$ be general and $Y^{\prime} = \Psi^{\prime} (D^{\prime})$.

\begin{lemma}\label{SSec32:LemmaRSFam}
Denote $F^{\prime} = \Psi^{\prime} (S^{\prime})$. Then
\begin{enumerate}[label=(\alph*), leftmargin=*, font=\rmfamily]
 \item $Y^{\prime}$ is a smooth curve of degree $g-2\gamma$ and genus $\gamma$, properly contained in a hyperplane $H^{\prime} \cong \mathbb{P}^{R-2} \subset \mathbb{P}^{R-1}$;

 \item $F^{\prime}$ is a cone over $Y^{\prime}$ with vertex $P^{\prime} = \Psi^{\prime} (\Gamma^{\prime}_0)$;

 \item $X^{\prime}$ is a linearly normal smooth curve on $F^{\prime}$ of degree $d = 2g - 4\gamma + 1$ and genus $g$ that passes through $P^{\prime}$;

 \item the linear projection $\pi_{P^{\prime}} : X^{\prime} \dashrightarrow H^{\prime}$ extends to a double-covering morphism $\varphi^{\prime} : X^{\prime} \to Y^{\prime}$ for which $\mathcal{O}_{X^{\prime}} (1) \cong \mathcal{O}_{X^{\prime}} (R_{\varphi^{\prime}} - \zeta)$, where $R_{\varphi^{\prime}}$ is  the ramification divisor of $\varphi^{\prime}$ and $\zeta$ is the other point, apart from $P^{\prime}$, in which the tangent line
 $t_{P^{\prime}}$ to $X^{\prime}$ at $P^{\prime}$ meets $X^{\prime}$.
\end{enumerate}
\end{lemma}
\begin{proof}[Proof of the lemma]
Most of the claims are clear from the preceding discussion. By Bertini's theorem $C^{\prime}$ and $D^{\prime}$ are smooth curves and since $\Psi^{\prime}$ is isomorphism away from $\Gamma^{\prime}_0$, it follows that $X^{\prime}$ and $Y^{\prime}$ are also smooth curves of degree $\deg X^{\prime} = (2\Gamma^{\prime}_0 + (2E^{\prime} + q)\mathfrak{f}^{\prime}) \cdot (\Gamma^{\prime}_0 + E^{\prime} \mathfrak{f}^{\prime}) = 2g-4\gamma+1$ and $\deg Y^{\prime} = (\Gamma^{\prime}_0 + E^{\prime} \mathfrak{f}^{\prime}) \cdot (\Gamma^{\prime}_0 + E^{\prime} \mathfrak{f}^{\prime}) = g-2\gamma$, correspondingly. For canonical divisor $K_{S^{\prime}}$ of $S^{\prime}$ we have $K_{S^{\prime}} \sim -2\Gamma^{\prime}_0 + (K_{\Gamma} - E^{\prime})\mathfrak{f}^{\prime}$, where $K_{\Gamma}$ denotes the canonical divisor of $\Gamma$. Applying the adjunction formula we get $(K_{S^{\prime}}) \cdot C^{\prime} = ((K_{\Gamma} + E^{\prime}+q)\mathfrak{f}^{\prime}) \cdot (2\Gamma^{\prime}_0 + (2E^{\prime}+q)\mathfrak{f}^{\prime}) = 2(g-1)$, which implies that the genus of $C^{\prime}$,  is $g$, and hence of $X^{\prime}$ as well. The argument that the genus of $D^{\prime}$, and hence of $Y^{\prime}$, is $\gamma$ is similar. The fact that $Y^{\prime}$ is properly contained in hyperplane of $\mathbb{P}^{R}$ follows by Riemann-Roch theorem since $h^0 (\mathcal{O}_{Y^{\prime}} (1)) = h^0 (\mathcal{O}_{D^{\prime}} (\Gamma^{\prime}_0 + E^{\prime}\mathfrak{f}^{\prime})) = g-2\gamma - \gamma + 1 = R-1$, due to the assumption that $g > 6\gamma + 5$. This completes the proof of {(a)}. Further, formula {\rm (\ref{SSec31:h0OS})} gives $h^0 (\mathcal{O}_{S^{\prime}} (\Gamma^{\prime}_0 + E^{\prime}\mathfrak{f}^{\prime})) = R$, so $F^{\prime}$ is linearly normal in $\mathbb{P}^{R-1}$. Since $D^{\prime}$ is a divisor in the linear system determining the morphism $\Psi^{\prime}$ and $Y^{\prime} = \Psi^{\prime} (D^{\prime})$, we can regard $F^{\prime}$ as a cone over $Y^{\prime}$. This gives claim {(b)}. To deduce the linear normality of $X^{\prime} \subset \mathbb{P}^{R-1}$ is sufficient to show that $h^0 (\mathcal{O}_{X^{\prime}} (1)) = h^0 (\mathcal{O}_{C^{\prime}} (\Gamma^{\prime}_0 + E^{\prime}\mathfrak{f}^{\prime})) = g - 3\gamma + 2$. For this consider the short exact sequence
\[
 0 \to \mathcal{O}_{S^{\prime}} (-\Gamma^{\prime}_0 - (E^{\prime}+q)\mathfrak{f}^{\prime}) \to \mathcal{O}_{S^{\prime}} (\Gamma^{\prime}_0 + E^{\prime}\mathfrak{f}^{\prime}) \to \mathcal{O}_{C^{\prime}} (\Gamma^{\prime}_0 + E^{\prime}\mathfrak{f}^{\prime}) \to 0 \, .
\]
Using Serre duality and Kawamata-Wiehweg vanishing theorem, it is not difficult to deduce that $h^1 (\mathcal{O}_{S^{\prime}} (-\Gamma^{\prime}_0 - (E^{\prime}+q)\mathfrak{f}^{\prime}))) = 0$, hence $h^0 (\mathcal{O}_{C^{\prime}} (\Gamma^{\prime}_0 + E^{\prime}\mathfrak{f}^{\prime})) = h^0 (\mathcal{O}_{S^{\prime}} (\Gamma^{\prime}_0 + E^{\prime}\mathfrak{f}^{\prime})) = R$. Since the linear system $|2\Gamma^{\prime}_0 + (2E^{\prime} + q)\mathfrak{f}^{\prime}))|$ has a base point $\Gamma^{\prime}_0 \cap q\mathfrak{f}^{\prime}$, it follows that $X^{\prime}$ will pass through the vertex $P^{\prime}$ of $F^{\prime}$. This completes the proof of {\rm (c)}.

It remains to prove {\rm (d)}. The linear projection $\pi_{P^{\prime}} : X^{\prime} \dashrightarrow H^{\prime}$ extends to a morphism via the blow-up of $F^{\prime}$ at $P^{\prime}$, but this is exactly the double covering morphism $\phi : C^{\prime} \to D^{\prime}$ on $S^{\prime}$ induced by the ruling. Thus, $\varphi : X^{\prime} \to Y^{\prime}$ is a double covering morphism. Since $C^{\prime} \sim 2\Gamma^{\prime}_0 + (2E^{\prime} + q)\mathfrak{f}^{\prime}$ is two-secant on $S^{\prime}$, the fiber $q\mathfrak{f}^{\prime}$ meets $C^{\prime}$ in two points: one of them is $x^{\prime} = C^{\prime} \cap \Gamma^{\prime}_0$ and let $z$ be the other. The morphism $\Psi^{\prime}$ then maps the fiber $q\mathfrak{f}^{\prime}$ into a line from the ruling of $F^{\prime}$ that is tangent to $X^{\prime}$, or otherwise $\Gamma^{\prime}_0$ and $C^{\prime}$, which is not the case. The intersection multiplicity of $t_{P^{\prime}}$ and $X^{\prime}$ is exactly two since $t_{P^{\prime}}$ meets $X^{\prime}$ also in the point $\zeta = \Psi^{\prime} (z)$, which is different from $P^{\prime}$. It remains to show only that $\mathcal{O}_{X^{\prime}} (1) \cong \mathcal{O}_{X^{\prime}} (R_{\varphi^{\prime}} - \zeta)$, which, by the linear normality of $X^{\prime}$, is equivalent to showing that the linear series $|\mathcal{O}_{S^{\prime}} (\Gamma^{\prime}_0 + E^{\prime}\mathfrak{f}^{\prime})|$ on $S^{\prime}$ induces the linear series $|\mathcal{O}_{C^{\prime}}(R_{\phi} - z)|$ on $C^{\prime}$, where $R_{\phi}$ is the ramification divisor of $\phi$. We have $R_{\phi} \sim K_{C^{\prime}} - \phi^{\ast} K_{D^{\prime}} \sim (K_{S^{\prime}} + C^{\prime})_{|_{C^{\prime}}} - (K_{S^{\prime}} + D^{\prime})_{|_{C^{\prime}}} \sim ((E^{\prime} + q)\mathfrak{f}^{\prime})_{|_{C^{\prime}}} \sim E^{\prime} \mathfrak{f}^{\prime}_{|_{C^{\prime}}} + x^{\prime} +z$. On the other hand, $(\Gamma^{\prime}_0 + E^{\prime} \mathfrak{f}^{\prime})_{|_{C^{\prime}}} \sim x^{\prime} + E^{\prime} \mathfrak{f}^{\prime}_{|_{C^{\prime}}}$, therefore $R_{\phi} - z \sim (\Gamma^{\prime}_0 + E^{\prime} \mathfrak{f}^{\prime})_{|_{C^{\prime}}}$. This completes the proof.
\end{proof}

The lemma gives that a general $X^{\prime}$ from the family $\mathcal{F}^{\prime}$ satisfies the properties {\rm (iii).1}, {\rm (iii).2} and {\rm (iii).3} of the theorem. For the dimension of $\mathcal{F}^{\prime}$ we have:

\noindent $\dim \mathcal{F} = $
\begin{itemize}[font=\sffamily, leftmargin=1.3cm, style=nextline]
     \item[$ + $] $3\gamma - 3$ \ : \ number of parameters of curves $\Gamma \in \mathcal{M}_{ \gamma }$

     \item[$ + $] $\gamma$ \ : \ number of parameters of line bundles $\mathcal{O}_{\Gamma} (E^{\prime}) \in \Pic (\Gamma)$ of degree $g-2\gamma \geq 4\gamma + 5$
     necessary to fix the geometrically ruled surface $\mathbb{P} (\mathcal{O}_{\Gamma} \oplus \mathcal{O}_{\Gamma} (-E^{\prime}))$

     \item[$ + $] $R^2 - 1 = \dim (\Aut (\mathbb{P}^{R-1}))$

     \item[$ + $] $1$ \ : \ number of parameters necessary to fix $q \in \Gamma$

     \item[$ - $] $((g - 2\gamma) - \gamma + 2) = \dim G_{F}$, where $G_{F}$ is the subgroup of $\Aut (\mathbb{P}^{R-1})$ fixing the scroll $F^{\prime}$, see \cite[Lemma 6.4, p. 148]{CCFM2009}

     \item[$ + $] $3g - 8\gamma + 4 = \dim |2{\Gamma}^{\prime}_0 + (2E^{\prime} + q){\mathfrak{f}^{\prime}}|$ \ : \ number of parameters to choose a curve in the linear equivalence class of $2{\Gamma}^{\prime}_0 + (2E^{\prime} + q){\mathfrak{f}^{\prime}}$ on $S^{\prime}$.
\end{itemize}
Accounting the numbers we get $\dim \mathcal{F}^{\prime} = 2g - \gamma - 1 + R^2$. Finally, define $\mathcal{D}^{\prime}_{d-1, g, R-1} \subset \mathcal{I}_{d-1, g, R-1}$ to be the closure, in $\mathcal{I}_{d-1, g, R-1}$, of the set parametrizing the family $\mathcal{F}^{\prime}$. In particular,
\[
 \dim \mathcal{D}^{\prime}_{d-1, g, R-1} = 2g - \gamma - 1 + R^2 .
\]
This completes {\rm Step I}.

\medskip

\subsection*{Step II: Dimension of the space of first-order deformations}

As it is well known, the space of first-order deformations of $X^{\prime}$ inside $\mathbb{P}^{R-1}$ is given by $H^0 (X^{\prime}, N_{X^{\prime} / \mathbb{P}^{R-1}})$, whose dimension we will compute in this step. By the construction of the family $\mathcal{F}^{\prime}$ in Step I, we have the double covering morphism $\varphi^{\prime} : X^{\prime} \to Y^{\prime}$. Applying {\rm Proposition \ref{Sec2InnerProjProp}}, we obtain the short exact sequence
{\small
\begin{equation}\label{Sec3SESInnerProjXPrime}
  0 \to \mathcal{O}_{X^{\prime}} (1) \otimes \mathcal{O}_{X^{\prime}} (R_{{\varphi}^{\prime}} + 2P^{\prime})
      \to N_{X^{\prime} / \mathbb{P}^{R-1}}
      \to {{\varphi}^{\prime}}^{\ast} N_{Y^{\prime} / \mathbb{P}^{R-2}} \otimes \mathcal{O}_{X^{\prime}} (P^{\prime})
      \to 0 \, ,
\end{equation}
}
which we will use to compute $h^0 (X^{\prime}, N_{X^{\prime} / \mathbb{P}^{R-1}})$. Since $\deg \mathcal{O}_{X^{\prime}} (1) \otimes \mathcal{O}_{X^{\prime}} (R_{{\varphi}^{\prime}} + 2P^{\prime}) = (d-1) + (2g-2 - 2(2\gamma - 2)) + 2 = (2g - 4\gamma + 1) + 2g - 4\gamma + 4 = 4g - 8\gamma + 5 > 2g - 2$, the line bundle $\mathcal{O}_{X^{\prime}} (1) \otimes \mathcal{O}_{X^{\prime}} (R_{{\varphi}^{\prime}} + 2P^{\prime})$ is non-special. Hence, the sequence {\rm (\ref{Sec3SESInnerProjXPrime})} is exact on global section and we have
\[
 h^0 (N_{X^{\prime} / \mathbb{P}^{R-1}}) = h^0 (\mathcal{O}_{X^{\prime}} (1) \otimes \mathcal{O}_{X^{\prime}} (R_{{\varphi}^{\prime}} + 2P^{\prime})) + h^0 ({{\varphi}^{\prime}}^{\ast} N_{Y^{\prime} / \mathbb{P}^{R-2}} \otimes \mathcal{O}_{X^{\prime}} (P^{\prime})) \, .
\]
By Riemann-Roch theorem we have
\begin{equation}\label{SSec32Eqh0NXP}
 h^0 (X^{\prime}, \mathcal{O}_{X^{\prime}} (1) \otimes \mathcal{O}_{X^{\prime}} (R_{{\varphi}^{\prime}} + 2P^{\prime}))
 = 3g - 8\gamma + 6 \, ,
\end{equation}
while for the computation of $h^0 (X^{\prime}, {{\varphi}^{\prime}}^{\ast} N_{Y^{\prime} / \mathbb{P}^{R-2}} \otimes \mathcal{O}_{X^{\prime}} (P^{\prime}))$ we use that
\begin{equation}\label{SSec32Eqh0NYP}
\begin{aligned}
 h^0 (X^{\prime}, {{\varphi}^{\prime}}^{\ast} N_{Y^{\prime} / \mathbb{P}^{R-2}} \otimes \mathcal{O}_{X^{\prime}} (P^{\prime}))
 & = h^0 (Y^{\prime}, {\varphi}^{\prime}_{\ast} ( \, {{\varphi}^{\prime}}^{\ast} N_{Y^{\prime} / \mathbb{P}^{R-2}} \otimes \mathcal{O}_{X^{\prime}} (P^{\prime}) \, ) ) \\
 & = h^0 (Y^{\prime}, N_{Y^{\prime} / \mathbb{P}^{R-2}} \otimes {\varphi}^{\prime}_{\ast} \mathcal{O}_{X^{\prime}} (P^{\prime}) ) \, .
\end{aligned}
\end{equation}

\begin{lemma}\label{SSec32St2LemPF}
${\varphi}^{\prime}_{\ast} \mathcal{O}_{X^{\prime}} (P^{\prime}) \cong \mathcal{O}_{Y^{\prime}} \oplus \mathcal{O}_{Y^{\prime}} (-1)$.
\end{lemma}
\begin{proof}[Proof of the lemma.]
Recall that by the construction of $\mathcal{F}^{\prime}$ in Step I, $X^{\prime}$ is the isomorphic image of a general $C^{\prime} \sim 2{\Gamma^{\prime}}_0 + (2E^{\prime}+q)\mathfrak{f}^{\prime}$ on $S^{\prime}$, and $Y^{\prime}$ is the isomorphic image of a general $D^{\prime} \sim {\Gamma^{\prime}}_0 + E^{\prime}\mathfrak{f}^{\prime}$ on $S^{\prime}$. The inner projection $\varphi^{\prime} : X^{\prime} \to Y^{\prime}$ is defined via the double-covering morphism $\phi : C^{\prime} \to D^{\prime}$ induced by the ruling of $S^{\prime}$. The statement is then equivalent to
\begin{equation}\label{SSec32LemEq}
 \phi_{\ast} \mathcal{O}_{C^{\prime}} (x^{\prime}) \cong \mathcal{O}_{D^{\prime}} \oplus \mathcal{O}_{D^{\prime}} (-{\Gamma^{\prime}}_0 - E^{\prime} \mathfrak{f}^{\prime}) = \mathcal{O}_{D^{\prime}} \oplus \mathcal{O}_{D^{\prime}} (-E^{\prime} \mathfrak{f}^{\prime}) \, .
\end{equation}
Since $D^{\prime} \cong \Gamma$, we conclude, using \cite[Proposition 2.2]{FP05MN}, that
\[
 \mathcal{O}_{D^{\prime}} \oplus \mathcal{O}_{D^{\prime}} (-E^{\prime} \mathfrak{f}^{\prime}) \cong \phi_{\ast} \mathcal{O}_{C^{\prime}} ({\Gamma^{\prime}}_0) \, .
\]
Finally, using that ${{\Gamma^{\prime}}_0}_{|_{C^{\prime}}} \sim x^{\prime}$, we get $\phi_{\ast} \mathcal{O}_{C^{\prime}} ({\Gamma^{\prime}}_0) = \phi_{\ast} \mathcal{O}_{C^{\prime}} (x^{\prime})$. This completes the proof of the lemma.
\end{proof}

Lemma \ref{SSec32St2LemPF} implies that  $N_{Y^{\prime} / \mathbb{P}^{R-2}} \otimes {\varphi}^{\prime}_{\ast} \mathcal{O}_{X^{\prime}} (P^{\prime}) \cong N_{Y^{\prime} / \mathbb{P}^{R-2}}\oplus N_{Y^{\prime} / \mathbb{P}^{R-2}} (-1)$. This reduces the computation of $h^0 (N_{X^{\prime} / \mathbb{P}^{R-1}})$ to the calculation of $h^0 (N_{Y^{\prime} / \mathbb{P}^{R-2}})$ and $h^0 (N_{Y^{\prime} / \mathbb{P}^{R-2}} (-1))$. It is well known that the Hilbert scheme of curves is generically smooth, irreducible and has the expected dimension when the degree is at least twice the genus, see \cite{CS1987} or \cite{Har1982}. In particular, $\mathcal{I}_{g-2\gamma, \gamma, g-3\gamma}$ is irreducible and generically smooth, of dimension $\lambda_{g-2\gamma, \gamma, R-2} = (g-2\gamma)(R-1)-(R-5)(\gamma-1)$. By the construction in Step I, $D^{\prime} \cong \Gamma$ is general in $\mathcal{M}_{\gamma}$ and the line bundle $\mathcal{O}_{E^{\prime}} \in \Pic^{g-2\gamma} (\Gamma)$ inducing the embedding $Y^{\prime} \subset \mathbb{P}^{R-2}$ is also general. Therefore, $[Y^{\prime}]$ is a general point of $\mathcal{I}_{g-2\gamma, \gamma, g-3\gamma}$ and
\[
    h^0 (Y^{\prime}, N_{Y^{\prime} / \mathbb{P}^{R-2}})
    = (g-2\gamma)(R-1) - (R-5)(\gamma - 1) \, .
\]
Using again the generality of $Y^{\prime} \cong \Gamma \in \mathcal{M}_{\gamma}$ and $\deg Y^{\prime} = g-2\gamma \geq 4\gamma + 5$ with $\gamma\geq 7$, it follows from \cite[Corollary 1.7 $(i)$]{Lop96} that  $h^0 (Y^{\prime}, N_{Y^{\prime} / \mathbb{P}^{R-2}} (-1)) = R-1$, hence
\[
\begin{aligned}
 h^0 (X^{\prime}, {{\varphi}^{\prime}}^{\ast} N_{Y^{\prime} / \mathbb{P}^{R-2}} \otimes \mathcal{O}_{X^{\prime}} (P^{\prime}))
 & = h^0 (Y^{\prime}, N_{Y^{\prime} / \mathbb{P}^{R-2}}) + h^0 (Y^{\prime}, N_{Y^{\prime} / \mathbb{P}^{R-2}}(-1)) \\
 & = R^2 - g + 7\gamma - 6 \, .
\end{aligned}
\]
Now, expressions {\rm (\ref{SSec32Eqh0NXP})} and {\rm (\ref{SSec32Eqh0NYP})} give
\[
 \begin{aligned}
  h^0 (X^{\prime}, N_{X^{\prime} / \mathbb{P}^{R-1}})
  & = h^0 (Y^{\prime}, N_{Y^{\prime} / \mathbb{P}^{R-2}})
  + h^0 (X^{\prime}, {{\varphi}^{\prime}}^{\ast} N_{Y^{\prime} / \mathbb{P}^{R-2}} \otimes \mathcal{O}_{X^{\prime}} (P^{\prime})) \\
  & = R^2 + 2g - \gamma \, .
 \end{aligned}
\]
Therefore,
\begin{equation}\label{SecDimTDPrime}
 \dim T_{[X^{\prime}]} \mathcal{D}^{\prime}_{d-1, g, R-1} = R^2 + 2g - \gamma \, .
\end{equation}
This completes {\rm Step II}.

\medskip

\subsection*{Step III: Non-reduced component \texorpdfstring{$\mathcal{D}^{\prime}_{d-1, g, R-1}$}{D' d-1, g, R-1}}

 Recall that for the dimension of the family $\mathcal{F}^{\prime}$ we found $\dim \mathcal{F}^{\prime} = R^2 + 2g - \gamma - 1 = h^0 (X^{\prime}, N_{X^{\prime} / \mathbb{P}^{R-1}}) - 1$. Together with equation {\rm (\ref{SecDimTDPrime})} this implies that either $\mathcal{D}^{\prime}_{d-1, g, R-1}$ is contained properly in a generically smooth component of dimension $R^2 + 2g - \gamma$ or $\mathcal{D}^{\prime}_{d-1, g, R-1}$ is a non-reduced component. To deduce that $\mathcal{D}^{\prime}_{d-1, g, R-1}$ is an irreducible component of $\mathcal{I}_{d-1, g, R-1}$ it is sufficient to show that $\mathcal{D}^{\prime}_{d-1, g, R-1}$ is not properly contained in a component of $\mathcal{I}_{d-1, g, R-1}$. For this assume the opposite, that is, suppose that there exist an irreducible component $\mathcal{E} \subset \mathcal{I}_{d-1, g, R-1}$ that contains $\mathcal{D}^{\prime}_{d-1, g, R-1}$ properly. This implies that $\dim \mathcal{E} \geq \dim \mathcal{D}^{\prime}_{d-1, g, R-1} + 1 = R^2 + 2g - \gamma$. Also, there must exist a flat family $\mathcal{X}$ of curves of genus $g$ and degree $d-1$ in $\mathbb{P}^{R-1}$ parametrized by  an irreducible curve $T$

\[
\begin{tikzcd}
    \mathcal{X} \arrow[r, hook] \arrow[d] & \mathbb{P}^{R-1} \times T \\
        T &
\end{tikzcd}
\]

\noindent such that $\mathcal{X}_{t_0}$ is a curve from the family $\mathcal{F}^{\prime}$ for some $t_0 \in T$ while $\mathcal{X}_t$ corresponds to a general point of $\mathcal{E}$ for $t \in T \setminus \{t_0\}$. By upper semicontinuity we get
$ \dim T_{[\mathcal{X}_{t}]} \mathcal{E} \leq  \dim T_{[\mathcal{X}_{t_0}]} \mathcal{E} = h^0 (\mathcal{X}_{t_0}, N_{\mathcal{X}_{t_0} / \mathbb{P}^{R-1}}) = R^2 + 2g - \gamma $, whence  $h^0 (\mathcal{X}_{t}, N_{\mathcal{X}_{t} / \mathbb{P}^{R-1}}) = R^2 + 2g - \gamma$ since
$\dim \mathcal{E} \geq \dim \mathcal{D}^{\prime}_{d-1, g, R-1} + 1 = R^2 + 2g - \gamma$. In particular, $\mathcal{E}$ must be generically smooth of dimension $\dim \mathcal{E} = R^2 + 2g - \gamma$. We will show that this is impossible (in fact, further in the proof we will argue only against $\dim \mathcal{E} \geq R^2 + 2g - \gamma$).

The essence of the arguments that follow is that the gonality of the curves from the family $\mathcal{F}^{\prime}$ is too large for the family $\mathcal{F}^{\prime}$ to be contained properly in another family. A general curve $X^{\prime}$ from $\mathcal{F}^{\prime}$ is double cover of a general (in moduli sense) curve $Y^{\prime}$ of genus $\gamma$, hence $\gon (Y^{\prime}) = \lfloor \frac{\gamma + 3}{2} \rfloor$. The Castelnuovo-Severi inequality then implies that
\[
    \gon (X^{\prime}) = 2 \lfloor \frac{\gamma + 3}{2} \rfloor \, .
\]
We can assume that $\mathcal{X}_{t_0}$ is a general curve from $\mathcal{F}^{\prime}$. Since $\mathcal{X}_{t_0} \subset \mathbb{P}^{R-1}$ is linearly normal, it follows by upper semicontinuity that $h^{0} (\mathcal{X}_{t}, \mathcal{O}_{\mathcal{X}_{t}} (1)) = R$. By the Riemann-Roch theorem we obtain that $h^{0} (\mathcal{X}_{t}, \omega_{\mathcal{X}_{t}} (-1)) = \gamma$. Let $B_t$ be the base locus of the linear series $|\omega_{\mathcal{X}_{t}} (-1)|$, and let $b_t := \deg B_t$. Then the linear series $|\omega_{\mathcal{X}_{t}} (-1)(-B_t)|$ must be complete of dimension $\gamma - 1$ and degree $4\gamma - 3 - b_t$. We denote $|\omega_{\mathcal{X}_{t}} (-1)(-B_t)| =: g^{\gamma - 1}_{4\gamma - 3 - b_t}$. Note that in such a case the Riemann-Roch theorem gives that $|\mathcal{O}_{\mathcal{X}_{t}} (1)(B_t)|$ is a very ample linear series on $\mathcal{X}_t$, and has dimension $R-1+b_t$ and degree $d-1+b_t$.

In what follows we will use the variety $\mathcal{W}^{r}_{\delta} (\xi)$ of line bundles defined on a family of curves $\xi : \mathcal{C} \to N$ parametrized by a scheme $N$, that is,
\[
\begin{aligned}
 \mathcal{W}^{r}_{\delta} (\xi)
    = \left\lbrace  (\mathcal{C}_{\nu} , L_{\nu}) \, \mid \, L_{\nu} \mbox{ is a line bundle of degree } \delta  \mbox{ on } \mathcal{C}_{\nu}  \mbox{ and } h^0 (\mathcal{C}_{\nu}, L_{\nu}) \geq r + 1 \right\rbrace \, .
\end{aligned}
\]
For details about them refer to \cite[Section 21]{ACGH2} or \cite{CIK17}. Remark that to each irreducible component $\mathcal{W} \subset \mathcal{W}^{r}_{\delta} (\xi)$ corresponds an irreducible component $\widetilde{\mathcal{W}} \subset \mathcal{W}^{\kappa - \delta + r -1}_{2\kappa - 2 - \delta} (\xi)$, where $\mathcal{W}^{\kappa - \delta + r -1}_{2\kappa - 2 - \delta} (\xi)$ is the dual to $\mathcal{W}^{r}_{\delta} (\xi)$ variety in the relative Jacobian and $\kappa$ is the genus of the curves from the family $\xi : \mathcal{C} \to N$. Indeed, $\dim \widetilde{\mathcal{W}} = \dim \mathcal{W}$.

Consider now the component $\mathcal{E} \subset \mathcal{I}_{d-1, g, R-1}$. With a small abuse of notation, we denote the family of curves parametrized by $\mathcal{E}$ again by $\mathcal{X}$. Thus, the fibers of the family $\mathcal{X} \to \mathcal{E}$ are denoted by $\mathcal{X}_t$ as we consider $t \in \mathcal{E}$. Since $h^{0} (\mathcal{X}_{t}, \mathcal{O}_{\mathcal{X}_{t}} (1)) = R$, the component $\mathcal{E}$ must be generically fibered over a component $\mathcal{W}$ of $\mathcal{W}^{R-1}_{d-1}$ as the fibers consist of the automorphisms of $\mathbb{P}^{R-1}$. Let $\widetilde{\mathcal{W}}$ be the dual of $\mathcal{W}$ in the relative Jacobian scheme. Remark that $\widetilde{\mathcal{W}} \subset \mathcal{W}^{\gamma-1}_{4\gamma - 3}$. The general points of $\widetilde{\mathcal{W}}$ are pairs $(\mathcal{X}_{t}, \omega_{\mathcal{X}_{t}} (-1))$. A priori, such $|\omega_{\mathcal{X}_{t}} (-1))|$ on $\mathcal{X}_{t}$ has base locus $B_t$ of degree $b_t$. The function that returns the degree of the base locus of a series in a variety of line bundles on a moving curve is upper semicontinuous. Hence, there is minimal number, say $b$, such that for every $(\mathcal{X}_{t}, \omega_{\mathcal{X}_{t}} (-1)) \in \widetilde{\mathcal{W}}$ the degree of the base locus of $|\omega_{\mathcal{X}_{t}} (-1)|$ is at least $b$ and for a general $(\mathcal{X}_{t}, \omega_{\mathcal{X}_{t}} (-1))$ the degree is $b_t = b$.

Consider the morphism
\[
 \phi_t \, : \, \mathcal{X}_{t} \to \mathcal{Z}_{t}\subset\mathbb{P}^{\gamma - 1} \, .
\]
induced by $g^{\gamma - 1}_{4\gamma - 3 - b}$, where  $\mathcal{Z}_{t} := \phi_t (\mathcal{X}_{t})$ and $g^{\gamma - 1}_{4\gamma - 3 - b} = |\omega_{\mathcal{X}_{t}} (-1)(-B_t)|$ and $(\mathcal{X}_{t}, \omega_{\mathcal{X}_{t}} (-1)) \in \widetilde{\mathcal{W}}$ is general. If $\mathcal{Z}_{t}$ is not smooth we consider the morphism $\tilde{\phi}_t$ that factors through its normalization $j \, : \, \tilde{\mathcal{Z}}_{t} \to \mathcal{Z}_{t}$

\begin{equation}\label{Sec3FactorDiagram}
\begin{tikzcd}
         & \tilde{\mathcal{Z}}_{t} \arrow[d, "j"] \\
    \mathcal{X}_t \arrow[ru, "\tilde{\phi}_t"] \arrow[r, "{\phi}_t"'] & \mathcal{Z}_{t}
\end{tikzcd}
\end{equation}

\noindent Denote by $k$ the degree of $\phi_t$ (notice that both $\phi_t$ and $\tilde{\phi}_t$ are of the same degree). Let $\tau$ be the geometric genus of $\mathcal{Z}_t$, or equivalently, the genus of $\tilde{\mathcal{Z}}_t$. There are two main cases about the morphism $\phi_t$ that we will consider separately:
\begin{itemize}
 \item[{\rm (i)}] $k = 1$, that is, $\phi_t \, : \, \mathcal{X}_t \to \mathcal{Z}_t$ is a birational morphism, and

 \item[{\rm (ii)}] $k \geq 2$, that is, $\phi_t$ is a multi-covering morphism.
\end{itemize}

We begin with the first case which is easier to deal with. In it we get that $\mathcal{X}_{t}$ must be birational to its image $\mathcal{Z}_{t}$, hence $\tau = g$. Recall that by the Castelnuovo genus bound, if $D$ is a smooth integral curve that admits a birationally very ample linear series $g^{r^{\prime}}_{d^{\prime}}$ then for its genus $g(D)$ we have
\[
 g(D) \leq \pi (d^{\prime}, r^{\prime}) := \binom{m}{2} (r^{\prime} - 1) + m \varepsilon \, ,
\]
where $m = \lfloor \frac{d^{\prime} - 1}{r^{\prime} - 1} \rfloor$ and $\varepsilon = (d^{\prime} - 1) - m (r^{\prime} - 1)$, see \cite[p. 116]{ACGH}. Applying it for the curve $\mathcal{X}_{t}$ we get
\[    g \leq \pi (4\gamma - 3 - b, \gamma -1) \leq \pi (4\gamma - 3 , \gamma -1) = 6\gamma + 4  \, ,
\]
which is impossible due to the assumption $g \geq 6\gamma + 5$ in the theorem.

We turn now to case {\rm (ii)}. To avoid confusion, we recall that in regards for the family of $\mathcal{X} \to \mathcal{E}$ we denote by $\mathcal{X}_{t_0}$ a specialization to a curve in $\mathcal{F}^{\prime}$ while $\mathcal{X}_{t}$ stands for a curve corresponding to a general point of $\mathcal{E}$. Recall that the function returning the gonality of a curve in family is lower semicontinuous, that is, it doesn't increase under specialization. As we remarked already, $\gon (\mathcal{X}_{t_0}) = 2 \lfloor \frac{\gamma + 3}{2} \rfloor$, so
\begin{equation}\label{Sec3GonBound}
 \gon (\mathcal{X}_{t}) \geq \gon (\mathcal{X}_{t_0}) \geq 10 \quad  \mbox{ for } \quad \gamma \geq 7 \, .
\end{equation}
The morphism $\phi_t$ in {\rm (\ref{Sec3FactorDiagram})} induces a linear series $g^{\gamma - 1}_{\frac{4\gamma - 3-b}{k}}$ on $\widetilde{\mathcal{Z}}_t$. Obviously, $k \leq 4$ as $k = 4$ if and only if $b=1$ and $\widetilde{\mathcal{Z}}_t$ is rational. However, the last would imply $\gon (\mathcal{X}_{t}) \leq 4$, which contradicts with {\rm (\ref{Sec3GonBound})}. This leaves $k = 2$ or $k = 3$ as the only options.

Before proceeding further, let us recall that sub-locus $\Sigma^k_{g,\tau} \subset \mathcal{M}_g$  of curves that admit a $k$-sheeted covering map to a curve of genus $\tau$ has dimension $\dim \Sigma^k_{g,\tau} = 2g-2 - (2k-3)(\tau - 1)$, see for example \cite{Lan75}. Note also that there is a rational map
\[
 \Phi \, : \, \mathcal{I}_{d-1, g, R-1} \dashrightarrow \mathcal{M}_g
\]
defined on the open subset of points corresponding to smooth integral curves. In particular, if $\mathcal{E}_0 \subset \mathcal{E}$ is the open subset of smooth curves, then $\Phi (\mathcal{E}_0) \subset \mathcal{M}_g$ and
\[
 \dim \Phi (\mathcal{E}_0) \geq  \dim \Phi (\mathcal{D}^{\prime}_{d-1,g,R-1}) = 2g - \gamma - 1 \, .
\]
$\Phi (\mathcal{E}_0)$ is contained in $\Sigma^k_{g,\tau}$ since $\mathcal{X}_t \to \widetilde{\mathcal{Z}}_t$ is a $k$-sheeted cover. Therefore $\dim \Phi (\mathcal{E}_0) \leq \dim \Sigma^k_{g,\tau}$, which implies $2g-2 - (2k-3)(\tau - 1) \geq \dim \Phi (\mathcal{E}_0) \geq  2g - \gamma - 1$, hence
\begin{equation}\label{Sec3DimOfImagInMgIneq}
 \gamma - 1 \geq (2k - 3) (\tau - 1) \, .
\end{equation}
Since $\widetilde{\mathcal{Z}}_t$ supports $g^1_{\lfloor \frac{\tau + 3}{2} \rfloor}$, it follows that $\gon (\mathcal{X}_t) \leq \frac{k}{2} (\tau + 3)$. Combining the last with {\rm (\ref{Sec3GonBound})} we obtain
\begin{equation}\label{Sec3GenusIneq_tau_gamma}
 \gamma + 2 \leq \frac{k}{2} (\tau + 3) \ .
\end{equation}
Together {(\ref{Sec3DimOfImagInMgIneq})} and {(\ref{Sec3GenusIneq_tau_gamma})} imply that
\begin{equation}\label{Sec3Bound_tau}
 \frac{k}{2} (\tau + 3) \geq (2k-3)(\tau - 1) + 3 \ ,
\end{equation}
whence either $k=3$ and $\tau\leq 3$ or $k=2$.

Consider $k = 3$ and $\tau\leq 3$. Then $\widetilde{\mathcal{Z}}_t$ admits a $g^1_3$, and hence ${\mathcal{X}_t}$ carries a $g^1_9$ via $\tilde{\phi}_t$ in {\rm (\ref{Sec3FactorDiagram})}. This is contraction to the equation {\rm (\ref{Sec3GonBound})}.

It remains to consider the case $k = 2$. In this case inequalities {(\ref{Sec3DimOfImagInMgIneq})} and {(\ref{Sec3GenusIneq_tau_gamma})} imply that
\[
 \gamma - 1 \leq \tau \leq \gamma \, .
\]
Let $\tilde{H}$ be a divisor on $\widetilde{\mathcal{Z}}_t$ corresponding to $\mathcal O_{\mathcal{Z}_t}(1)$. Assume that $\tau = \gamma - 1$. Since $\dim |\tilde{H}| = \tau$, $|\tilde{H}|$ is non-special and hence $\dim |\tilde{H} + z|= \tau + 1$ for any point $z \in \widetilde{\mathcal{Z}_t} $. For $x_1, x_2 \in {\mathcal{X}_t}$ with $\phi_t (x_1) = \phi_t (x_2)$ and $\supp (B_t \cap(x_1 + x_2)) = \emptyset$,
\[
 \begin{aligned}
    \dim |\omega_{\mathcal{X}_t}(-1) + x_1 + x_2| &
    \geq\dim|\omega_{\mathcal{X}_t}(-1)(-B_t + x_1 + x_2)| \\
    & =\dim| \omega_{\mathcal{X}_t}(-1)(-B_t)| + 1 \\
    & =\dim| \omega_{\mathcal{X}_t}(-1)|+1 \, ,
 \end{aligned}
\]
since $\phi^{\ast}_t (\mathcal O_{\mathcal{Z}_t}(1)) = \omega_{\mathcal{X}_t}(-1)(-B_t)$.
Hence,
\[
    \dim| \mathcal O_{\mathcal{X}_t}(1)(-z_1-z_2)|
    \geq \dim| \mathcal O_{\mathcal{X}_t}(1)|-1 \, ,
\]
which contradicts with the very ampleness of $\mathcal O_{\mathcal{X}_t}(1)$.

\noindent The only possibility that remains to consider is $\tau = \gamma$. This tells us that
$|\tilde{H}|$ is a linear series ${g}^{\gamma-1}_{2\gamma - 2 - \frac{b-1}{2}}$ on the curve $\widetilde{\mathcal{Z}}_t$ having genus $\gamma$. Hence $b=1$. In sum, it follows that $\Phi (\mathcal{E}_0) \subset \Sigma^2_{g,\gamma}$ and
$
\dim \Phi^{-1} ( \mathcal{X}_{t} ) = R^2
$,
which gives $\dim \mathcal{E} = \dim \mathcal{E}_0 \leq \dim \Sigma^2_{g,\gamma} + R^2 = R^2 + 2g - \gamma - 1$. This is a contradiction. As a consequence, $\mathcal{D}^{\prime}_{d-1, g, R-1}$ is an irreducible component of $\mathcal{I}_{d-1, g, R-1}$. This completes Step III and thus, the proof of Theorem B.

\medskip

\section{Remarks on the relation between Theorem A and Theorem B}\label{Section_4}

In our motivation for Theorem A in \cite[Sec. 2]{CIK21}, we considered a family of ruled surfaces $S = \mathbb{P}(\mathcal{O}_{\Gamma} \oplus \mathcal{O}_{\Gamma}(-E))$ with a natural projection morphism $f: S \to \Gamma$, where $\Gamma$ runs through $\mathcal{M}_{\gamma}$ and $E$ through $\Div^{g-2\gamma+1}(\Gamma)$, where $g \geq 4\gamma - 2$. We then constructed the family $\mathcal{F}$ of curves $X$ of genus $g$ and degree $d = 2g - 4\gamma + 2$ in $\mathbb{P}^R$, $R = g - 3\gamma + 2$, as images of the divisors $C \in |2\Gamma_0 + 2E\mathfrak{f}|$ under the morphism $\Psi: S \to \mathbb{P}^R$ determined by the linear series $|\Gamma_0 + E\mathfrak{f}|$, where $\Gamma_0$ is the section of $S$ corresponding to
\[
    0 \to \mathcal{O}_{\Gamma} \to \mathcal{O}_{\Gamma} \oplus \mathcal{O}_{\Gamma}(-E) \to \mathcal{O}_{\Gamma}(-E) \to 0.
\]
As we have seen, the image $F = \Psi(S) \subset \mathbb{P}^R$ of such $S$ is a cone over a curve $Y \subset \mathbb{P}^{R-1}$ of genus $\gamma$ and degree $g - 2\gamma + 1$, where $Y$ is the image under $\Psi$ of a divisor in $|\Gamma_0 + E\mathfrak{f}|$. However, in the proof of Theorem A, we opted to exhibit the curves from the family $\mathcal{F}$ as the intersection of cones over the curves parametrized by $\mathcal{I}_{g-2\gamma+1, \gamma, g-3\gamma+1}$ with hypersurfaces of degree 2 in $\mathbb{P}^{g-3\gamma+2}$, as the geometric arguments were more comprehensive and provided a deeper insight into the problem. In our motivation in Section \ref{Section_2}, we showed the geometric relation between the curves $X$ from the family $\mathcal{F}$ and the curves $X^{\prime}$ from the family $\mathcal{F}^{\prime}$. In what follows we will describe the relation between $\mathcal{F}$ and $\mathcal{F}^{\prime}$ using ruled surfaces and elementary transformations between them.

Recall that in the construction of $\mathcal{F}^{\prime}$, we also used the curves $\Gamma \in \mathcal{M}_{\gamma}$. We further considered $E^{\prime}$ running through the set of effective divisors of degree $g-2\gamma$ on $\Gamma$, where $g > 6\gamma + 5$, and then we constructed the ruled surface $S^{\prime} = \mathbb{P}(\mathcal{O}_{\Gamma} \oplus \mathcal{O}_{\Gamma}(-E^{\prime}))$. For any point $q \in \Gamma$, we have $\deg(E - q) \geq 6\gamma + 5$; hence, there exists an effective divisor $E^{\prime} \in \Div^{g-2\gamma}(\Gamma)$ such that $E - q \sim E^{\prime}$. It is not difficult to see that for a point $x \in f^{-1}(q)$, the elementary transformation $\mathrm{elm}_x S$ of $S$ gives the ruled surface $S^{\prime}$. The converse is also true in a sense: for $x^{\prime} = \Gamma^{\prime}_0 \cap q \mathfrak{f}^{\prime}$, the elementary transformation $\mathrm{elm}_{x^{\prime}} S^{\prime}$ gives $S$. The following proposition shows explicitly this relation as it also includes the curves $C$ and $C^{\prime}$ in Figure \ref{Figure_2}.

\begin{prop}\label{Sec4:PropRSElmT}
Let $\Gamma$ be a smooth curve of genus $\gamma \geq 4$ and let $e > 2\gamma$ be an integer. For a divisor $E \in \Div^{e}(\Gamma)$, denote by $S$ the ruled surface $\mathbb{P}(\mathcal{O}_{\Gamma} \oplus \mathcal{O}_{\Gamma}(-E))$ with projective morphism $f: S \to \Gamma$. Similarly, for $E^{\prime} \in \Div^{e-1}(\Gamma)$, denote by $S^{\prime}$ the ruled surface $\mathbb{P}(\mathcal{O}_{\Gamma} \oplus \mathcal{O}_{\Gamma}(-E^{\prime}))$ with projective morphism $f^{\prime}: S^{\prime} \to \Gamma$. Let $\Gamma_0$ be the section of $S$ corresponding to $\mathcal{O}_{\Gamma} \oplus \mathcal{O}_{\Gamma}(-E) \twoheadrightarrow \mathcal{O}_{\Gamma}(-E)$, and let $\Gamma^{\prime}_0$ be the section of $S^{\prime}$ corresponding to $\mathcal{O}_{\Gamma} \oplus \mathcal{O}_{\Gamma}(-E^{\prime}) \twoheadrightarrow \mathcal{O}_{\Gamma}(-E^{\prime})$. Suppose that $q \in \Gamma$ is such that $E \sim E^{\prime} + q$. Then:
\begin{enumerate}[label=(\alph*), leftmargin=*, font=\rmfamily]
    \item For any $x \in f^{-1}(q) \setminus q\mathfrak{f} \cap \Gamma_0$, the elementary transformation of $S$ is $\elm_x S = S^{\prime}$ as $\elm_x \Gamma_0 = \Gamma^{\prime}_0$;

    \item For a curve $C \in |2\Gamma_0 + 2E \mathfrak{f}|$ and $x \in C \cap q\mathfrak{f}$, the proper transform $\tilde{C}$ of $C$ under $\elm_x$ belongs to $|2\Gamma^{\prime}_0 + (2E^{\prime} + q) \mathfrak{f}^{\prime}|$;

    \item For $x^{\prime} = q \mathfrak{f}^{\prime} \cap \Gamma^{\prime}_0$, the elementary transformation of $S^{\prime}$ is $\elm_{x^{\prime}} S^{\prime} = S$  as $\elm_{x^{\prime}} \Gamma^{\prime}_0 = \Gamma_0$;

    \item For a smooth curve $C^{\prime} \in |2\Gamma^{\prime}_0 + (2E^{\prime}+q) \mathfrak{f}|$ and for $x^{\prime}$ as in (c), the proper transform $\tilde{C}^{\prime}$ of $C^{\prime}$ under $\elm_{x^{\prime}}$ belongs to $|2\Gamma_0 + 2E \mathfrak{f}|$.
\end{enumerate}
\end{prop}
\begin{proof}
Recall that the elementary transformation $\elm_x$ blows-up $S$ at $x$, and then contracts the fiber $q \mathfrak{f}$ as $x \in f^{-1}$. Since $x \notin \Gamma_0$ and $e > 2\gamma$, in particular $q$ is not a base point of $|\mathcal{O}_{\Gamma}(E)|$, it follows by \cite[Theorem 48.3]{FP05} that $\elm_x S = \mathbb{P} (\mathcal{O}_{\Gamma} \oplus \mathcal{O}_{\Gamma}(-E+q))$. Since $-E^{\prime} \sim -E + q$, we have $\mathcal{O}_{\Gamma}(-E+q) \cong \mathcal{O}_{\Gamma}(-E^{\prime})$. Therefore, $S^{\prime} = \elm_x S$, which proves (a), as \cite[Theorem 48.3]{FP05} also gives the claim $\elm_x \Gamma_0 = \Gamma^{\prime}_0$. To see (b), notice that $C \sim 2\Gamma_0 + 2E \mathfrak{f}$ and $(\Gamma_0 + E \mathfrak{f}) \cdot \Gamma_0 = -e + e = 0$, which means that $x \notin \Gamma_0$. By a similar argument as in (a), we get $\elm_x S = S^{\prime}$, and since $\elm_x$ transforms the uni-secants from the class of $\Gamma_0 + E\mathfrak{f}$ on $S$, except those that contain $x$, into uni-secants in the class of $\Gamma^{\prime}_0 + E\mathfrak{f}^{\prime}$ on $S^{\prime}$, while those containing $x$ are transformed into $\Gamma^{\prime}_0 + (E-q)\mathfrak{f}^{\prime}$. Since we have that $E^{\prime} \sim E-q$, we conclude for the proper transform $\tilde{C} = \elm_x$ of $C$ that $\tilde{C} \sim \Gamma^{\prime}_0 + (2E^{\prime} + q)\mathfrak{f}^{\prime}$. This proves (b).

\begin{figure}[h]
  \centering

\begin{tikzpicture}[scale=0.62]
    \draw [line width=1.75pt, blue] plot [smooth, tension=0.5] coordinates{(-5,0.5) (-3.5,0.5) (-2.5,1) (-2,2) (-2,3) (-3,4) (-4.5,4.5) (-6,4.5) (-7.5,4) (-8,3) (-8, 2) (-7.5,1) (-6.5,0.5) (-5,0.5)};
    \draw [line width=2pt, red] plot [smooth, tension=0.5] coordinates{(-8,4) (-7.5,3.5) (-7,3.5) (-6,4) (-5,4.5) (-4,5) (-2,5)};
    \draw [line width=2pt, lime!90] plot coordinates{(-5,-0.5) (-5,5.5)};
    \draw [line width=3pt, gray!50] plot coordinates{(-6.5,1) (-3.5,1)};
    \draw [fill=orange!50, line width=0pt] (-5,4.5) circle (5.5pt);
    \node[rotate=0, below right] at (-5, 4.5) {{$x$}};
    \node[rotate=0, right] at (-5, -0.5) {{$q\mathfrak{f}$}};
    \node[rotate=0, above] at (-6.5, 1) {{${\Gamma}_0$}};
    \node[rotate=0] at (-8.5, 2.5) {{$C$}};
    \node[rotate=0] at (-7, 3) {{$\Lambda$}};

    \draw [line width=1.75pt, blue] plot [smooth, tension=0.5] coordinates{(0,6) (2,6) (3,6.5) (3.5,7.5) (3.3,8) (3,8.5) (2.5,9) (2,9.5) (1,10) (0,10) (-2, 9.5) (-3,9) (-3.5, 8.5) (-3.5,7) (-3,6.5) (-2,6) (0,6)};
    \draw [line width=2pt, lime!90] plot [smooth, tension=0.5] coordinates{(0,5) (0,8) (0.5,8.5) (1,8.5)};
    \draw [line width=2pt, orange!50] plot coordinates{(0.5,8) (0.5,10.5)};
    \draw [line width=3pt, gray!50] plot coordinates{(-2,6.5) (2,6.5)};
    \draw [line width=2pt, red] plot [smooth, tension=0.5] coordinates{(-3.5,9.5) (-3,9) (-2,8.5) (-1,8.5) (0,9) (1,9) (2,9.5) (3.5,9.5)};
    \node[rotate=0] at (-7.5, 5.5) {{\tiny $S = \mathbb{P}(\mathcal{O}_{\Gamma} \oplus \mathcal{O}_{\Gamma} (-E))$}};

    \draw [line width=1.75pt, blue] plot [smooth, tension=0.5] coordinates{(5,1) (6,1.3) (7,1.5) (7.5,2) (7.5,3) (7,4) (6,4.5) (4,4.5) (2,4) (1,3) (1, 2) (1.5,1) (2,0.65) (3,0.5) (4,0.5) (5,1)};
    \draw [line width=2pt, red] plot [smooth, tension=0.5] coordinates{(1,4) (2,3.5) (3,3.5) (4,4) (5,4) (6,4.5) (7.5,5)};
    \draw [line width=2pt, orange!50] plot coordinates{(5,-0.5) (5,5.5)};
    \draw [line width=3pt, gray!50] plot coordinates{(3,1) (7,1)};
    \draw [fill=lime!90, line width=0pt] (5,1) circle (5.5pt);
    \node[rotate=0] at (7.5, 5.5) {{\tiny $S^{\prime} = \mathbb{P}(\mathcal{O}_{\Gamma} \oplus \mathcal{O}_{\Gamma} (-E^{\prime}))$}};
    \node[rotate=0, below right] at (5, 1) {{$x^{\prime}$}};
    \node[rotate=0, right] at (5, -0.5) {{$q{\mathfrak{f}^{\prime}}$}};
    \node[rotate=0, above] at (3, 1) {{${\Gamma}^{\prime}_0$}};
    \node[rotate=0] at (8, 2.5) {{$C^{\prime}$}};
    \node[rotate=0] at (2.5, 3) {{$\Lambda^{\prime}$}};

    \draw[->, dashed] plot [smooth, tension=0.5] coordinates{(-2,3.5) (-1, 3.8) (0, 3.8) (1,3.5)};
    \draw[->, dashed] plot [smooth, tension=0.5] coordinates{(1,1.5) (0, 1.2) (-1, 1.2) (-2,1.5)};
    \node[rotate=0] at (-0.5, 4.1) {{\scriptsize $\nu = \elm_{x}$}};
    \node[rotate=0] at (-0.5, 0.85) {{\scriptsize $\nu^{-1} = \elm_{x^{\prime}}$}};
    \node[rotate=0] at (-0.5, 10.95) {{\footnotesize $\Bl_{x} S = \Bl_{x^{\prime}} {S}^{\prime}$}};
    \draw[->, line width=1pt] plot [smooth, tension=0.5] coordinates{(-4,8) (-4.5, 7.8)(-4.75, 7.5) (-4.9, 7) (-5, 6.5) (-5,6)};
    \node[rotate=0] at (-5, 8) {{ $\sigma$}};
    \draw[->, line width=1pt] plot [smooth, tension=0.5] coordinates{(4,8) (4.5, 7.8)(4.75, 7.5) (4.9, 7) (5, 6.5) (5,6)};
    \node[rotate=0] at (5, 8) {{ $\varepsilon$}};
\end{tikzpicture}

  \caption{ $S = \mathbb{P} (\mathcal{O}_{\Gamma} \oplus \mathcal{O}_{\Gamma}(-E))$ and $S^{\prime} = \mathbb{P} (\mathcal{O}_{\Gamma} \oplus \mathcal{O}_{\Gamma}(-{E}^{\prime}))$, $E \sim E^{\prime} + q$. }
\label{Figure_2}
\end{figure}

\noindent Similarly, $\elm_{x^{\prime}}$ blows-up $S^{\prime}$ at $x^{\prime}$, and then contracts the fiber $q \mathfrak{f}^{\prime}$. Since $x^{\prime}$ is the intersection point of $\Gamma^{\prime}_0$ and $q\mathfrak{f}^{\prime}$, it follows by \cite[Theorem 48.1]{FP05} that $\elm_{x^{\prime}} S^{\prime} = \mathbb{P} (\mathcal{O}_{\Gamma} \oplus \mathcal{O}_{\Gamma}(-E^{\prime}-q))$. Since $E^{\prime} + q \sim E$, we get $\elm_{x^{\prime}} S^{\prime} = S$. Remark $\elm_{x^{\prime}} \Gamma^{\prime}_0 = \Gamma_0$ also follows by \cite[Theorem 48.1]{FP05}. This completes the proof of (c). Finally, by \cite[Prop. 36]{FP05}, the linear series $|2\Gamma^{\prime}_0 + (2E^{\prime} + q)\mathfrak{f}^{\prime}|$ has a unique base-point at $x^{\prime} = q\mathfrak{f}^{\prime} \cap \Gamma^{\prime}_0$. In particular, $C^{\prime}$ must contain $x^{\prime}$. Since $C^{\prime} \cdot q\mathfrak{f}^{\prime} = 2$, $q\mathfrak{f}^{\prime}$ intersects $C^{\prime}$ in a second point, which is indeed not on $\Gamma^{\prime}_0$ because $C^{\prime} \cdot \Gamma^{\prime}_0 = 1$. This means that the proper transform $\tilde{C}^{\prime}$ of $C^{\prime}$ under $\elm_{x^{\prime}}$ doesn't not meet $\Gamma_0$. Using that $\Pic (S)$ is generated by $\Gamma_0$ and $f^{\ast}\Pic(\Gamma)$, we find that $\tilde{C}^{\prime} \sim 2\Gamma_0 + 2E \mathfrak{f}$. Remark that the transforms under $\elm_{x^{\prime}}$ of all curves in $|2\Gamma^{\prime}_0 + (2E^{\prime} + q)\mathfrak{f}^{\prime}|$ pass through the point in which $\elm_{x^{\prime}}$ collapses the fiber $q\mathfrak{f}^{\prime}$. This completes the proof of the proposition.
\end{proof}

Now we are in the position to describe the relation between the family $\mathcal{F}$ parametrized by the component $\mathcal{D}_{d, g, R}$ of $\mathcal{I}_{d, g, R}$ and the family $\mathcal{F}^{\prime}$ parametrized by the component $\mathcal{D}^{\prime}_{d, g, R}$ of $\mathcal{I}_{d-1, g, R-1}$.

A general curve $X$ from $\mathcal{F}$ is cut on a cone $F$ over $Y$, $[Y] \in \mathcal{D}_{g-2\gamma+1, \gamma, R-1}$ general, by a hypersurface of degree two. Such $X$ can be viewed as the image of a curve $C$ in the linear class $2Y_0 + 2E \mathfrak{f}$ on the ruled surface $S = \mathbb{P} (\mathcal{O}_Y \oplus \mathcal{O}_Y (-E))$ with $f : S \to Y$, under the morphism $\Psi$ determined by $|Y_0 + 2E \mathfrak{f}|$, where $E \in |\mathcal{O}_Y (1)|$ and $Y_0$ is the section of $S$ corresponding to $\mathcal{O}_Y \oplus \mathcal{O}_Y (-E) \twoheadrightarrow \mathcal{O}_Y (-E)$. For a point $x \in C$ we get by Proposition \ref{Sec4:PropRSElmT} the elementary transformation $\elm_x S$ transforms $S$ into a ruled surface $S^{\prime}$ such that:
\begin{itemize}
 \item $S^{\prime} = \mathbb{P} (\mathcal{O}_Y \oplus \mathcal{O}_Y (-E+q))$ has a projection $f^{\prime} : S^{\prime} \to Y$ and a section $Y^{\prime}_0$ corresponding to $\mathcal{O}_Y \oplus \mathcal{O}_Y (-E+q) \twoheadrightarrow \mathcal{O}_Y (-E+q)$ for which $Y^{\prime}_0 = \elm_x (Y_0)$;

 \item the sub-series in $|Y_0 + E \mathfrak{f}|$ of sections vanishing at $x$ are transformed into the linear series $|Y^{\prime}_0 + (E-q) \mathfrak{f}^{\prime}|$ on $S^{\prime}$

 \item for the proper transform $\tilde{C} = \elm_x (C)$ of $C$ we have $\tilde{C} \sim 2Y^{\prime}_0 + (2E-q) \mathfrak{f}^{\prime}$.
\end{itemize}
The morphism $\Psi^{\prime}$, determined by $|Y^{\prime}_0 + (E-q) \mathfrak{f}^{\prime}|$, obviously maps $\tilde{C}$ to a curve from the family $\mathcal{F}^{\prime}$. This corresponds to the geometric picture described in Proposition \ref{Sec2:PropInnerProj}, see also Figure \ref{Figure_3}.

\begin{figure}[h]
  \centering

\begin{tikzcd}
     & & \Bl_{x} S \equiv \Bl_{x^{\prime}} S^{\prime} \ar[dll, bend right=10, "\sigma"'] \ar[drr, bend left=10, "\varepsilon"] & & \\
 S \ar[rrrr, bend left=10, dashrightarrow, "\nu = \elm_{x}"] \ar[d, "\Psi"] & &   & & S^{\prime} \ar[llll, bend left=10, dashrightarrow, "\nu^{-1} = \elm_{x^{\prime}}"] \ar[dd, "{\Psi}^{\prime}"] \\
 \mathbb{P}^{R} \supset F \supset X \phantom{.} \ar[rrrrd, "\pi_{ {\Psi (x)}}"] \ar[d, shift left = 9mm, "\varphi"] & &   & & \\
 \phantom{X.} \mathbb{P}^{R-1} \supset Y & &   & & \phantom{X.} X^{\prime} \subset F^{\prime} \subset \mathbb{P}^{R-1} \ar[d, shift left = -9mm, "{\varphi}^{\prime}"] \\
  & &   & & Y^{\prime} \subset \mathbb{P}^{R-2} \phantom{X.}
\end{tikzcd}

  \caption{ $F^{\prime} = {\Psi}^{\prime} (S^{\prime}) \subset \mathbb{P}^{R-1}$ is the inner projection of $F = \Psi (S) \subset \mathbb{P}^{R}$. }
\label{Figure_3}
\end{figure}

Conversely, every curve $X^{\prime}$ from the family $\mathcal{F}^{\prime}$ is the image of a curve $C^{\prime} \sim 2\Gamma^{\prime} + (2E^{\prime} +q) \mathfrak{f}^{\prime}$ on a ruled surface $S^{\prime} = \mathbb{P} (\mathcal{O}_{\Gamma} \oplus \mathcal{O}_{\Gamma} (-E^{\prime}))$ under $\Psi^{\prime}$ associated with $|\Gamma^{\prime} + E \mathfrak{f}^{\prime}|$, where $f^{\prime} : S^{\prime} \to \Gamma$ is natural projection, $E^{\prime} \in \Div^{g-2\gamma}$, $\Gamma^{\prime}_0$ is the section corresponding to $\mathcal{O}_{\Gamma} \oplus \mathcal{O}_{\Gamma} (-E^{\prime})\twoheadrightarrow \mathcal{O}_{\Gamma} (-E^{\prime})$ and $q$ is a point on $\Gamma$. By Proposition \ref{Sec4:PropRSElmT} the elementary transformation $\elm_{x^{\prime}} S^{\prime}$, $x^{\prime} = \Gamma^{\prime}_0 \cap q\mathfrak{f}^{\prime}$ transforms $S^{\prime}$ into a ruled surface $S$ such that:
\begin{itemize}
 \item $S = \mathbb{P} (\mathcal{O}_{\Gamma} \oplus \mathcal{O}_{\Gamma} (-E^{\prime}-q))$ has a projection $f : S \to \Gamma$ and a section $\Gamma_0$ corresponding to $\mathcal{O}_{\Gamma} \oplus \mathcal{O}_{\Gamma} (-E^{\prime}-q) \twoheadrightarrow \mathcal{O}_{\Gamma} (-E{\prime}-q)$ for which $\Gamma_0 = \elm_{x^{\prime}} (\Gamma^{\prime}_0)$;

 \item the series in $|\Gamma^{\prime}_0 + E^{\prime} \mathfrak{f}^{\prime}|$ transforms into the linear sub-series $|\Gamma_0 + (E^{\prime}+q) \mathfrak{f}|$ on $S$, as it consists of the sections vanishing at the point into which $\elm_{x^{\prime}}$ collapses the fiber $q \mathfrak{f}^{\prime}$;

 \item for the proper transform $\tilde{C}^{\prime} = \elm_{x^{\prime}} (C^{\prime})$ of $C^{\prime}$ we have $\tilde{C}^{\prime} \sim 2\Gamma_0 + (2E^{\prime}+2q) \mathfrak{f}$.
\end{itemize}
The morphism $\Psi$, determined by $|\Gamma_0 + (E^{\prime}+q) \mathfrak{f}|$, maps $\tilde{C}^{\prime}$ to a curve from the family $\mathcal{F}$.

\begin{remark}
Smooth curves lying on a cone and passing through its vertex have been studied by Jaffe, see \cite{Jaf91}, and Catalisano and  Gimigliano, see \cite{CG99}. Using their results we can identify a general curve $X^{\prime}$ from the family $\mathcal{F}^{\prime}$ as \emph{cut on the cone $F^{\prime} \subset \mathbb{P}^{R-1}$, as in the theorem, by a cubic hypersurface that contains $g - 2\gamma - 1$ lines from the ruling of $F^{\prime}$}.
\end{remark}

\end{document}